\documentclass[smallextended]{svjour3}
\usepackage[T1]{fontenc}      
\usepackage[utf8]{inputenc}   
\DeclareUnicodeCharacter{00A0}{ } 
\usepackage{microtype}        
\usepackage{bbm}              
\usepackage{amsmath}
\usepackage{amssymb}
\usepackage{commath}          
\usepackage{mathtools}
\usepackage[numbers,square]{natbib}
\bibliographystyle{apa}
\usepackage{verbatim}               
\usepackage[dvipsnames]{xcolor}     



\smartqed

\numberwithin{equation}{section}

\spnewtheorem{theorem}{Theorem}[section]{\bfseries}{\itshape}
\numberwithin{theorem}{section}
\spnewtheorem{lemma}[theorem]{Lemma}{\bfseries}{\itshape}
\spnewtheorem{proposition}[theorem]{Proposition}{\bfseries}{\itshape}
\spnewtheorem{definition}[theorem]{Definition}{\bfseries}{\itshape}
\spnewtheorem{corollary}[theorem]{Corollary}{\bfseries}{\itshape}
\spnewtheorem{remark}[theorem]{Remark}{\bfseries}{\rmfamily}


\DeclareMathOperator{\e}{e} 
\renewcommand{\b}[1]{\pmb{#1}} 

\DeclarePairedDelimiterX\Set[2]{\lbrace}{\rbrace}%
{ #1 \,:\, #2 }                                         
\DeclarePairedDelimiterX\inprod[2]{\langle}{\rangle}%
{ #1 , #2 }                                             
\DeclareMathOperator*{\argmin}{arg\,min}  
\DeclareMathOperator*{\argmax}{arg\,max}  

\newcommand{\T}{\mathsf{T}} 

\newcommand{\R}{\mathbf{R}} 
\newcommand{\N}{\mathbf{N}} 



\title{Kernel-based interpolation at approximate Fekete points}

\author{Toni Karvonen \and Simo Särkkä \and Ken'ichiro Tanaka}
\institute{T. Karvonen
\at Department of Electrical Engineering and Automation, Aalto University, Espoo, Finland \& The Alan Turing Institute, London, United Kingdom \\ \email{tkarvonen@turing.ac.uk}
\and S. Särkkä 
\at Department of Electrical Engineering and Automation, Aalto University, Espoo, Finland \\ \email{simo.sarkka@aalto.fi}
\and
K. Tanaka \at
Department of Mathematical Informatics, Graduate School of Information Science and Technology, University of Tokyo, Japan \\
\email{kenichiro@mist.i.u-tokyo.ac.jp}
}

\date{}

\begin{document}

\maketitle

\begin{abstract}
We construct approximate Fekete point sets for kernel-based interpolation by maximising the determinant of a kernel Gram matrix obtained via truncation of an orthonormal expansion of the kernel.
Uniform error estimates are proved for kernel interpolants at the resulting points.
If the kernel is Gaussian we show that the approximate Fekete points in one dimension are the solution to a convex optimisation problem and that the interpolants converge with a super-exponential rate.
Numerical examples are provided for the Gaussian kernel.
\keywords{reproducing kernel Hilbert spaces \and Gaussian kernel \and radial basis functions}
\end{abstract}

\section{Introduction}

Kernel-based methods are widely used in interpolation and approximation of functions~\citep{Wendland2005,Fasshauer2007,FasshauerMcCourt2015}.
Let $d \in \N$ and $\Omega \subset \R^d$ be a compact set with a non-empty interior. 
Given evaluations of a function $f \colon \Omega \to \R$ at a scattered set of distinct points $\mathcal{X}_n = \{ \b{x}_1, \ldots, \b{x}_n \} \subset \Omega$ and a continuous positive-definite kernel $K \colon \Omega \times \Omega \to \R$, the \emph{kernel interpolant} $s_{f}$ is 
\begin{equation*}
  s_f(\b{x}) = \sum_{k=1}^n c_k K(\b{x}, \b{x}_k),
\end{equation*}
where the coefficients $c_k$ are uniquely determined by the interpolation conditions \sloppy{${s_f(\b{x}_k) = f(\b{x}_k)}$} for every $k = 1,\ldots,n$.
The choice of the evaluation points $\mathcal{X}_n$ can have a significant effect on the accuracy of the approximation $s_f(\b{x}) \approx f(\b{x})$ at $\b{x} \notin \mathcal{X}_n$.
Popular methods for constructing ``good'' point sets include different types of greedy algorithms~\citep{SchabackWendland2000,DeMarchiSchabackWendland2005,Muller2009,WirtzHaasdonk2013,SantinHaasdonk2017} that construct the next point $\b{x}_{n+1}$ by maximising the power function.
An alternative approach is to select $n$ points concurrently by maximising
\begin{equation*}
  \det \mathcal{K}_{\mathcal{X}_n} = \det (K(\b{x}_k, \b{x}_m))_{k,m=1}^n,
\end{equation*}
the determinant of the kernel Gram matrix, over all sets of $n$ points $\mathcal{X}_n \subset \Omega$.
The resulting points are called \emph{Fekete points} in an analogue to the classical Fekete points that maximise the Vandermonde determinant~\citep{BosDeMarchi2010,BrianiSommarivaVianello2012}.
The asymptotic distribution of these points for kernel-based interpolation in one dimension has been studied by \citet{BosMaier2002} and \citet{BosDeMarchi2011}.

Because maximisation of $\det \mathcal{K}_{\mathcal{X}_n}$ is typically intractable, in this article we study \emph{approximate Fekete points} that are obtained by maximising the determinant of the kernel matrix of a truncated version of the kernel.
Let $\{\varphi_\ell\}_{\ell=1}^\infty$ be an orthonormal basis of~$\mathcal{H}_K(\Omega)$, the \emph{reproducing kernel Hilbert space} (RKHS) of $K$.
Then the kernel can be written as
\begin{equation*}
  K(\b{x}, \b{y}) = \sum_{\ell=1}^\infty \varphi_\ell(\b{x}) \varphi_\ell(\b{y}).
\end{equation*}
The approximate Fekete points $\mathcal{X}_n^*$ are then defined as any set of $n$ points that maximise
\begin{equation} \label{eq:fekete-intro}
  \det \widehat{\mathcal{K}}_{\mathcal{X}_n} = \det \Bigg( \sum_{\ell=1}^n \varphi_\ell(\b{x}_k) \varphi_\ell(\b{x}_m) \Bigg)_{k,m=1}^n.
\end{equation}
This and related constructions have been recently suggested by \citet{Tanaka2019} and, in the context of numerical integration and sampling from determinantal point processes, by \citet{Belhadji2019} and \citet{Gautier2019}.
Our construction differs slightly from the prior work in that we do not require the basis functions $\{\varphi_\ell\}_{\ell=1}^\infty$ to arise from Mercer's theorem, which significantly simplifies analysis and construction of the points, at least when the kernel is Gaussian.
This article contains two main theoretical contributions:
\begin{itemize}
\item Let $f \in \mathcal{H}_K(\Omega)$. In Section~\ref{sec:error-estimates} we use a bound on the Lebesgue constant for interpolation using $\{\varphi_\ell\}_{\ell=1}^n$ to prove that 
  \begin{equation} \label{eq:error-intro}
    \sup_{ \b{x} \in \Omega} \abs[0]{ f(\b{x}) - s_f(\b{x}) } \leq 2\norm[0]{f}_{\mathcal{H}_K(\Omega)} (1 + n) \sup_{\b{x} \in \Omega} \bigg( \sum_{\ell=n+1}^\infty \varphi_\ell(\b{x})^2 \bigg)^{1/2}
  \end{equation}
  for kernel interpolation at any approximate Fekete points.
\item In Section~\ref{sec:gauss-kernel} we show that for a certain simple orthonormal expansion~\citep{Minh2010} of the univariate Gaussian kernel
  \begin{equation*}
    K(x,y) = \exp\big( \! -\varepsilon^2 (x-y)^2 \big)
  \end{equation*}
  with a scale parameter $\varepsilon > 0$ the objective function~\eqref{eq:fekete-intro} is convex and has a unique maximiser.
  This is made possible by a convenient factorisation of the determinant in~\eqref{eq:fekete-intro} for this basis.
  We then specialise the uniform error estimate~\eqref{eq:error-intro} and some other results from Section~\ref{sec:error-estimates} for the Gaussian kernel.
\end{itemize}
Two numerical examples for the Gaussian kernel are given in Section~\ref{sec:examples}.
We also discuss improved error estimates in subspaces of $\mathcal{H}_K(\Omega)$ and tensor product extensions of the univariate approximate Fekete points for anisotropic multivariate Gaussian kernels.

\section{Background}

This section reviews basic properties of kernel interpolants and defines the approximate Fekete points studied in the remainder of the article.

\subsection{Kernel-based interpolation} \label{sec:interpolation}

Every positive-definite kernel $K \colon \Omega \times \Omega \to \R$ on a general domain \sloppy{${\Omega \subset \R^d}$} induces a unique reproducing kernel Hilbert space $\mathcal{H}_K(\Omega)$, which is a Hilbert space consisting of real-valued functions defined on $\Omega$. The RKHS is characterised by the properties that $K(\cdot, \b{x}) \in \mathcal{H}_K(\Omega)$ for every $\b{x} \in \Omega$ and $\inprod{f}{K(\cdot, \b{x})}_{\mathcal{H}_K(\Omega)} = f(\b{x})$ for every $f \in \mathcal{H}_K(\Omega)$ and $\b{x} \in \Omega$, the latter of which is known as the \emph{reproducing property}.

Given a set of $n$ distinct points, $\mathcal{X}_n = \{\b{x}_1, \ldots, \b{x}_n\} \subset \Omega$, the kernel interpolant $s_f$ is the minimum-norm interpolant to a function $f \colon \Omega \to \R$ at these points:
\begin{equation} \label{eq:minimum-norm}
  s_f = \argmin \Set[\big]{ \norm[0]{g}_{\mathcal{H}_K(\Omega)} }{ g \in \mathcal{H}_K(\Omega) \text{ s.t. } g(\b{x}_k) = f(\b{x}_k) \text{ for every } x_k \in \mathcal{X}_n }.
\end{equation}
This definition implies that $\norm[0]{s_f}_{\mathcal{H}_K(\Omega)} \leq \norm[0]{f}_{\mathcal{H}_K(\Omega)}$. The main advantage in working in an RKHS as opposed to some different function space is that the minimum-norm interpolant has a simple algebraic form:
\begin{equation} \label{eq:kernel-interpolant}
  s_f(\b{x}) = \sum_{k=1}^n c_k K(\b{x}, \b{x}_k) = \b{c}^\T \b{k}_{\mathcal{X}_n}(\b{x}),
\end{equation}
where we denote $\b{c} = (c_1, \ldots, c_n) \in \R^n$ and $\b{k}_{\mathcal{X}_n}(\b{x}) = (K(\b{x},\b{x}_k))_{k=1}^n \in \R^n$.
The coefficients $\b{c}$ are 
\begin{equation*}
  \begin{bmatrix} c_1 \\ \vdots \\ c_n \end{bmatrix} = \begin{bmatrix} K(\b{x}_1,\b{x}_1) & \cdots & K(\b{x}_1, \b{x}_n) \\ \vdots & \ddots & \vdots \\ K(\b{x}_n, \b{x}_1) & \cdots & K(\b{x}_n,\b{x}_n) \end{bmatrix}^{-1} \begin{bmatrix} f(\b{x}_1) \\ \vdots \\ f(\b{x}_n) \end{bmatrix},
\end{equation*}
where $\mathcal{K}_{\mathcal{X}_n} = (K(\b{x}_k,\b{x}_m))_{k,m=1}^n$ is the positive-definite kernel Gram matrix. 
From this it follows that $s_f$ is the unique interpolant to $f$ at $\mathcal{X}_n$ in the span of $\{K(\cdot,\b{x}_k)\}_{k=1}^n$. 

The interpolant can be written as $s_f = \sum_{k=1}^n f(\b{x}_k) u_k$ using the cardinal functions \sloppy{${u_k \in \operatorname{span} \{K(\cdot,\b{x}_k)\}_{k=1}^n}$} that satisfy $u_k(\b{x}_m) = \delta_{km}$.
From the reproducing property and the Cauchy--Schwarz inequality it then follows that for any $f \in \mathcal{H}_K(\Omega)$ the interpolation error admits the bound
\begin{equation} \label{eq:P-error-bound}
  \begin{split}
  \abs[0]{f(\b{x}) - s_f(\b{x})} &= \abs[4]{ \inprod[\bigg]{ f }{ K(\cdot, \b{x}) - \sum_{k=1}^n K(\cdot, \b{x}_k) u_k(\b{x}) }_{\mathcal{H}_K(\Omega)}} \\
  &\leq \norm[0]{f}_{\mathcal{H}_K(\Omega)} \norm[3]{ K(\cdot, \b{x}) - \sum_{k=1}^n K(\cdot, \b{x}_k) u_k(\b{x}) }_{\mathcal{H}_K(\Omega)} \\
  &\eqqcolon \norm[0]{f}_{\mathcal{H}_K(\Omega)} P_{\mathcal{X}_n}(\b{x}),
  \end{split}
\end{equation}
where the non-negative \emph{power function}, $P_{\mathcal{X}_n}$, can be alternatively expressed as
\begin{equation} \label{eq:power-function}
P_{\mathcal{X}_n}(\b{x}) = \sqrt{ K(\b{x}, \b{x}) - \b{k}_{\mathcal{X}_n}(\b{x})^\T \mathcal{K}_{\mathcal{X}_n}^{-1} \b{k}_{\mathcal{X}_n}(\b{x}) } = \sup_{ \norm[0]{f}_{\mathcal{H}_K(\Omega)} \leq 1} \abs[0]{ f(\b{x}) - s_f(\b{x}) }.
\end{equation}
The latter form is the point-wise worst-case approximation error.
The power function can be also written in a determinantal form~\citep[e.g.,][Lemma~3]{Schaback2005}
\begin{equation*}
  P_{\mathcal{X}_n}(\b{x}) = \frac{\det \mathcal{K}_{\mathcal{X}_n \cup \{\b{x}\}}}{\det \mathcal{K}_{\mathcal{X}_n}},
\end{equation*}
which suggests, via~\eqref{eq:P-error-bound}, that points $\mathcal{X}_n$ that maximise $\det \mathcal{K}_{\mathcal{X}_n}$ ought to provide small approximation error.
Numerous explicit bounds on the error $f-s_f$ in different norms and for different classes of kernels and functions within and without the RKHS can be found in \citep[Chapter~11]{Wendland2005} and \citep{WendlandRieger2005,NarcowichWardWendland2006,Arcangeli2007,RiegerZwicknagl2010}.

\subsection{Approximate Fekete points} \label{sec:points}

For the remainder of this article we assume that $\Omega$ is a compact subset of $\R^d$ with a non-empty interior and that the positive-definite kernel $K \colon \Omega \times \Omega \to \R$ is continuous.
These assumptions guarantee that the RKHS is separable~\citep[e.g.,][Proposition~11.7]{PaulsenRaghupathi2016}.
Let $\{\varphi_\ell\}_{\ell=1}^\infty$ be any orthonormal basis of $\mathcal{H}_K(\Omega)$.
Then the kernel can be written as 
\begin{equation} \label{eq:kernel-expansion}
  K(\b{x}, \b{y}) = \sum_{\ell=1}^\infty \varphi_\ell(\b{x}) \varphi_\ell(\b{y})
\end{equation}
for all $\b{x}, \b{y} \in \Omega$.
Note that there is an infinite number of different orthonormal bases of the RKHS and the expansion~\eqref{eq:kernel-expansion} is valid for each of them. For example, an infinitude of bases can be generated by varying the domain and measure in Mercer's theorem (see Section~\ref{sec:superconvergence}), though we do not assume that the basis $\{\varphi_\ell\}_{\ell=1}^\infty$ arises this way.
It is easy to verify that $K$ in~\eqref{eq:kernel-expansion} is the reproducing kernel: Any $f \in \mathcal{H}_K(\Omega)$ has the expansion $f = \sum_{\ell=1}^\infty \inprod{f}{\varphi_\ell}_{\mathcal{H}_K(\Omega)} \varphi_\ell$ so that
\begin{equation*}
  \begin{split}
  \inprod{f}{K(\cdot, \b{x})}_{\mathcal{H}_K(\Omega)} &= \sum_{\ell,k=1}^\infty \inprod{ \varphi_\ell}{ \varphi_k}_{\mathcal{H}_K(\Omega)} \inprod{f}{\varphi_\ell}_{\mathcal{H}_K(\Omega)} \varphi_k(\b{x}) \\
  &= \sum_{\ell=1}^\infty \inprod{f}{\varphi_\ell}_{\mathcal{H}_K(\Omega)} \varphi_\ell(\b{x}) \\
  &= f(\b{x}).
  \end{split}
\end{equation*}
The \emph{Fekete points} for interpolation with the kernel~\eqref{eq:kernel-expansion} are the points that maximise the determinant
\begin{equation} \label{eq:fekete-max}
  \det \mathcal{K}_{\mathcal{X}_n} = \det \begin{bmatrix} K(\b{x}_1,\b{x}_1) & \cdots & K(\b{x}_1, \b{x}_n) \\ \vdots & \ddots & \vdots \\ K(\b{x}_n,\b{x}_1) & \cdots & K(\b{x}_n,\b{x}_n) \end{bmatrix}
\end{equation}
of the kernel matrix.
As exact computation of the Fekete points is typically challenging, we fix an orthonormal basis $\{\varphi_\ell\}_{\ell=1}^\infty$ of $\mathcal{H}_K(\Omega)$, truncate the expansion~\eqref{eq:kernel-expansion} after $n$ terms and consider maximisation of the resulting approximation of the objective function~\eqref{eq:fekete-max}.
Define the truncated kernel
\begin{equation} \label{eq:truncated-kernel}
  \widehat{K}(\b{x}, \b{y}) = \sum_{\ell=1}^{n} \varphi_\ell(\b{x}) \varphi_\ell(\b{y})
\end{equation}
and its kernel matrix $\widehat{\mathcal{K}}_{\mathcal{X}_n} = (\widehat{K}(\b{x}_k,\b{x}_m))_{k,m=1}^n \in \R^{n \times n}$.
From~\eqref{eq:truncated-kernel} it is easy to see that
\begin{equation*}
  \widehat{\mathcal{K}}_{\mathcal{X}_n} = \Phi_{\mathcal{X}_n} \Phi_{\mathcal{X}_n}^\T, \quad \text{ where } \quad \Phi_{\mathcal{X}_n} = \begin{bmatrix} \varphi_1(\b{x}_1) & \cdots & \varphi_{n}(\b{x}_1) \\ \vdots & \ddots & \vdots \\ \varphi_1(\b{x}_n) & \cdots & \varphi_{n}(\b{x}_n) \end{bmatrix}.
\end{equation*}
The \emph{approximate Fekete points} $\mathcal{X}_n^* = \{\b{x}_1^*, \ldots, \b{x}_n^*\} \subset \Omega$ are then any points such that
\begin{equation} \label{eq:approximate-fekete}
  \mathcal{X}_n^* = \{\b{x}_1^*, \ldots, \b{x}_n^*\} \in \argmax_{ \mathcal{X}_n = \{\b{x}_1, \ldots, \b{x}_n\} \subset \Omega} \det \widehat{\mathcal{K}}_{\mathcal{X}_n} = \argmax_{ \mathcal{X}_n = \{\b{x}_1, \ldots, \b{x}_n\} \subset \Omega} \det \Phi_{\mathcal{X}_n}.
\end{equation}
Note that because $\{\varphi_\ell\}_{\ell=1}^n$ are linearly independent, there exists $\mathcal{X}_n \subset \Omega$ such that $\det \Phi_{\mathcal{X}_n} > 0$.
As $\Omega$ is compact and the continuity of $K$ implies the continuity of the basis functions, there exist points $\mathcal{X}_n^*$ at which $\det \Phi_{\mathcal{X}_n}$ attains a maximal value.

Given a set $\mathcal{X}_n$ of $n$ previously selected points, the popular $P$-greedy algorithm~\citep{DeMarchiSchabackWendland2005,SantinHaasdonk2017} selects $\b{x}_{n+1}$ such that
\begin{equation} \label{eq:p-greedy}
  \b{x}_{n+1} \in \argmax_{ \b{x} \in \Omega} \, P_{\mathcal{X}_n}(\b{x}),
\end{equation}
which, using the block determinant identity and~\eqref{eq:power-function}, can be written in the equivalent form
\begin{equation*}
  \b{x}_{n+1} \in \argmax_{ \b{x} \in \Omega} \, \det \begin{bmatrix} \mathcal{K}_{\mathcal{X}_n} & \b{k}_{\mathcal{X}_n}(\b{x}) \\ \b{k}_{\mathcal{X}_n}(\b{x})^\T & K(\b{x}, \b{x}) \end{bmatrix} = \argmax_{ \b{x} \in \Omega} \, \det \mathcal{K}_{\mathcal{X}_n \cup \{\b{x}\}}.
\end{equation*}
That is, the $P$-greedy points can be interpreted as greedily computed Fekete points.
Because it is known~\citep{SantinHaasdonk2017} that the interpolation error of the $P$-greedy algorithm decays fast (in some cases with an optimal rate), it is reasonable to expect that these rates are inherited or surpassed by interpolation at the Fekete points, and by extension perhaps by interpolation at the approximate Fekete points.
This is confirmed by numerical examples for the Gaussian kernel in Section~\ref{sec:examples}.

\section{Error estimates} \label{sec:error-estimates}

This section provides upper bounds on the error of approximating $f \in \mathcal{H}_K(\Omega)$ with the kernel interpolant $s_f$ when the interpolation points are the approximate Fekete points from Section~\ref{sec:points}.

\subsection{Interpolation with basis functions and Lebesgue constants}

For any $f \colon \Omega \to \R$ and any points $\mathcal{X}_n = \{\b{x}_1, \ldots, \b{x}_n\} \subset \Omega$ such that the matrix $\Phi_{\mathcal{X}_n} = (\varphi_m(\b{x}_k))_{k,m=1}^n$ is invertible there exists a unique interpolant $s_f^\varphi$ such that
\begin{itemize}
\item[(i)] $s_f^\varphi(\b{x}_k) = f(\b{x}_k)$ for every $k = 1 \ldots, n$;
\item[(ii)] $s_f^\varphi \in \operatorname{span}\{\varphi_\ell\}_{\ell=1}^{n}$.
\end{itemize}
From these requirements it follows that
\begin{equation} \label{eq:auxiliary-interpolant}
  s_f^\varphi = \sum_{k=1}^n c_k \varphi_{k},
\end{equation}
where the coefficients are
\begin{equation*}
  \begin{bmatrix} c_1 \\ \vdots \\ c_n \end{bmatrix} = \begin{bmatrix} \varphi_1(\b{x}_1) & \cdots & \varphi_{n}(\b{x}_1) \\ \vdots & \ddots & \vdots \\ \varphi_1(\b{x}_n) & \cdots & \varphi_{n}(\b{x}_n) \end{bmatrix}^{-1} \begin{bmatrix} f(\b{x}_1) \\ \vdots \\ f(\b{x}_n) \end{bmatrix}.
\end{equation*}
Alternatively, the interpolant can be written in the \emph{Lagrange form}
\begin{equation} \label{eq:lagrange-form}
  s_f^\varphi = \sum_{k=1}^n f(\b{x}_k) u_k^\varphi,
\end{equation}
where $u_k^\varphi$ are the \emph{Lagrange basis functions} solved from
\begin{equation} \label{eq:lagrange-system}
  \begin{bmatrix} \varphi_1(\b{x}_1) & \cdots & \varphi_{1}(\b{x}_{n}) \\ \vdots & \ddots & \vdots \\ \varphi_{n}(\b{x}_1) & \cdots & \varphi_{n}(\b{x}_n) \end{bmatrix} \begin{bmatrix} u_1^\varphi(\b{x}) \\ \vdots \\ u_k^\varphi(\b{x}) \end{bmatrix} = \begin{bmatrix} \varphi_1(\b{x}) \\ \vdots \\ \varphi_{n}(\b{x}) \end{bmatrix}
\end{equation}
for every $\b{x} \in \Omega$.
The \emph{Lebesgue constant} is defined using the Lagrange function as follows:
\begin{equation} \label{eq:lebesgue-constant}
  \Lambda_\varphi(\mathcal{X}_n) = \sup_{ \b{x} \in \Omega} \, \sum_{k=1}^n \abs[0]{u_k^\varphi(\b{x})}.
\end{equation}
A standard argument yields a conservative upper bound on the Lebesgue constant at approximate Fekete points~\citep{BosDeMarchi2010}.

\begin{proposition} \label{eq:lebesgue-fekete} If $\mathcal{X}_n^*$ are any approximate Fekete points~\eqref{eq:approximate-fekete}, then the Lebesgue constant~\eqref{eq:lebesgue-constant} satisfies
  \begin{equation} \label{eq:lebesgue-bound}
    \Lambda_\varphi(\mathcal{X}_n^*) \leq n.
  \end{equation}
\end{proposition}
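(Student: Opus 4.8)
The plan is to obtain a closed-form expression for each Lagrange function $u_k^\varphi$ by solving the linear system \eqref{eq:lagrange-system} with Cramer's rule, and then to recognise the numerator of the resulting ratio of determinants as the objective in \eqref{eq:approximate-fekete} evaluated at a perturbed point configuration. Since $\mathcal{X}_n^*$ maximises this objective, each $\abs[0]{u_k^\varphi}$ will be bounded by one, and summing over the $n$ functions yields the claim.

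Concretely, I would write the coefficient matrix in \eqref{eq:lagrange-system} as $\Phi_{\mathcal{X}_n^*}^\T$, so that the system reads $\Phi_{\mathcal{X}_n^*}^\T \b{u}^\varphi(\b{x}) = \b{\varphi}(\b{x})$ with $\b{u}^\varphi(\b{x}) = (u_1^\varphi(\b{x}), \ldots, u_n^\varphi(\b{x}))^\T$ and $\b{\varphi}(\b{x}) = (\varphi_1(\b{x}), \ldots, \varphi_n(\b{x}))^\T$. Cramer's rule then gives
\begin{equation*}
  u_k^\varphi(\b{x}) = \frac{\det B_k(\b{x})}{\det \Phi_{\mathcal{X}_n^*}^\T} = \frac{\det B_k(\b{x})}{\det \Phi_{\mathcal{X}_n^*}},
\end{equation*}
where $B_k(\b{x})$ denotes $\Phi_{\mathcal{X}_n^*}^\T$ with its $k$-th column replaced by $\b{\varphi}(\b{x})$.

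The key observation is that, after transposing (which leaves the determinant unchanged), $B_k(\b{x})^\T$ is precisely the matrix $\Phi$ built from the point set obtained by replacing $\b{x}_k^*$ with $\b{x}$; that is, $\det B_k(\b{x}) = \det \Phi_{\mathcal{Y}_n}$ for $\mathcal{Y}_n = (\mathcal{X}_n^* \setminus \{\b{x}_k^*\}) \cup \{\b{x}\}$. Since $\mathcal{Y}_n$ is again a configuration of $n$ points in $\Omega$, and since $\mathcal{X}_n^*$ maximises $\abs[0]{\det \Phi_{\mathcal{X}_n}}$ over all such configurations, we obtain $\abs[0]{\det B_k(\b{x})} \leq \abs[0]{\det \Phi_{\mathcal{X}_n^*}}$, hence $\abs[0]{u_k^\varphi(\b{x})} \leq 1$ for every $\b{x} \in \Omega$ and every $k$. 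Summing over $k$ and taking the supremum over $\b{x} \in \Omega$ in \eqref{eq:lebesgue-constant} gives $\Lambda_\varphi(\mathcal{X}_n^*) \leq n$.

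The argument is essentially routine, so I do not expect a genuine obstacle; the only points requiring care are the bookkeeping of the transpose and column-to-row replacement that identifies $\det B_k(\b{x})$ with a genuine configuration determinant, and the observation that the signed-determinant maximiser in \eqref{eq:approximate-fekete} also maximises the absolute determinant (the two maxima coincide, since points may be permuted to fix the sign, and configurations with coincident points give a vanishing determinant). The resulting bound is the standard, conservative one; it is independent of both the kernel and the basis, which is why it will later need to be complemented by finer, basis-specific estimates.
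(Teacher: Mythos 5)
Your proposal is correct and follows essentially the same route as the paper: Cramer's rule applied to \eqref{eq:lagrange-system} expresses $u_k^\varphi(\b{x})$ as a ratio of determinants, the numerator is recognised as $\det\Phi$ of the configuration with $\b{x}_k^*$ replaced by $\b{x}$, and maximality of $\mathcal{X}_n^*$ bounds each $\abs[0]{u_k^\varphi}$ by one. Your explicit remark that the signed and absolute determinant maximisers coincide is a small point of care that the paper glosses over, but it is not a different argument.
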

\begin{proof}
Cramer's rule applied to~\eqref{eq:lagrange-system} gives
\begin{equation} \label{eq:cramer}
  u_k^\varphi(\b{x}) = \frac{\det \Phi^{k}_{\mathcal{X}_n}(\b{x})}{\det \Phi_{\mathcal{X}_n}},
\end{equation}
where $\Phi_{\mathcal{X}_n}^k(\b{x})$ is obtained by replacing the $k$th row of the matrix $\Phi_{\mathcal{X}_n}$ with the row vector $(\varphi_1(\b{x}), \ldots, \varphi_n(\b{x})) \in \R^n$.
  Because any approximate Fekete points maximise $\det \Phi_{\mathcal{X}_n}$ among all sets of $n$ points within $\Omega$ and $\Phi_{\mathcal{X}^*_n}^k(\b{x}) = \Phi_{\mathcal{X}_{n,k}^*(\b{x})}$ with \sloppy{${\mathcal{X}_{n,k}^*(\b{x}) = \{ \b{x}_1, \ldots, \b{x}_{k-1}, \b{x}, \b{x}_{k+1}, \ldots, \b{x}_n\}}$},
  \begin{equation*}
    \det \Phi_{\mathcal{X}^*_n} \geq \det \Phi_{\mathcal{X}^*_{n,k}(\b{x})} = \det \Phi_{\mathcal{X}^*_n}^k(\b{x}).
  \end{equation*}
  From~\eqref{eq:cramer} we thus get
  \begin{equation*}
    \Lambda_\varphi(\mathcal{X}_n^*) = \sup_{ \b{x} \in \Omega} \, \sum_{k=1}^n \abs[0]{u_k^\varphi(\b{x})} = \sup_{ \b{x} \in \Omega} \, \sum_{k=1}^n \abs[3]{ \frac{\det \Phi^k_{\mathcal{X}^*_n}(\b{x})}{\det \Phi_{\mathcal{X}^*_n}} } \leq \sup_{ \b{x} \in \Omega } \, \sum_{k=1}^n 1 = n.
  \end{equation*} \qed
\end{proof}

See~\citep{DeMarchiSchaback2010} for bounds on the Lebesgue constant for kernel interpolation, $\sup_{ \b{x} \in \Omega} \sum_{k=1}^n \abs[0]{u_k(\b{x})}$, when the RKHS is a Sobolev space.

\subsection{Uniform error estimates} \label{sec:error-uniform}

In this section we derive an estimate of the uniform interpolation error when $f$ is in the RKHS of $K$.
Recall that since $\{\varphi_\ell\}_{\ell=1}^\infty$ is an orthonormal basis of $\mathcal{H}_K(\Omega)$, any $f \in \mathcal{H}_K(\Omega)$ can be written as
\begin{equation} \label{eq:f-expansion}
  f = \sum_{\ell=1}^n f_\ell \varphi_\ell
\end{equation}
for a square-summable sequence of real coefficients $f_\ell = \inprod{f}{\varphi_\ell}_{\mathcal{H}_K(\Omega)}$.
The RKHS norm of $f$ in~\eqref{eq:f-expansion} is
\begin{equation} \label{eq:f-norm}
  \norm[0]{f}_{\mathcal{H}_K(\Omega)}^2 = \sum_{\ell=1}^\infty f_\ell^2.
\end{equation}
That is, $\mathcal{H}_K(\Omega)$ consists of functions having the form~\eqref{eq:f-expansion} such that their norm in~\eqref{eq:f-norm} is finite.
The following standard result on orthonormal expansions will be useful. Its proof consists of a straightforward application of the Cauchy--Schwarz inequality.

\begin{lemma} \label{lemma:g-lemma} If $f = \sum_{\ell=1}^\infty f_\ell \varphi_\ell \in \mathcal{H}_K(\Omega)$, then
  \begin{equation*}
    \abs[4]{ f(\b{x}) - \sum_{\ell=1}^n f_\ell \varphi_\ell(\b{x}) } \leq \norm[0]{f}_{\mathcal{H}_K(\Omega)} \bigg( \sum_{\ell=n+1}^\infty \varphi_\ell(\b{x})^2 \bigg)^{1/2}
  \end{equation*}
for every $\b{x} \in \Omega$.
\end{lemma}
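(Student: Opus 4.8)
The plan is to write the left-hand side as the tail of the orthonormal expansion of $f$ and then to apply the Cauchy--Schwarz inequality to the resulting scalar series. The first thing I would establish is that the expansion $f = \sum_{\ell=1}^\infty f_\ell \varphi_\ell$, which holds a priori only in the RKHS norm, also converges pointwise to $f(\b{x})$ for each fixed $\b{x} \in \Omega$. This follows from the boundedness of the evaluation functional in an RKHS: setting $f_N = \sum_{\ell=1}^N f_\ell \varphi_\ell$, the reproducing property together with the Cauchy--Schwarz inequality gives $\abs[0]{f_N(\b{x}) - f(\b{x})} = \abs[0]{\inprod{f_N - f}{K(\cdot,\b{x})}_{\mathcal{H}_K(\Omega)}} \leq \norm[0]{f_N - f}_{\mathcal{H}_K(\Omega)} \sqrt{K(\b{x},\b{x})}$, and the right-hand side vanishes as $N \to \infty$. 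Consequently $f(\b{x}) - \sum_{\ell=1}^n f_\ell \varphi_\ell(\b{x}) = \sum_{\ell=n+1}^\infty f_\ell \varphi_\ell(\b{x})$, with this tail series convergent.

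Next I would apply the Cauchy--Schwarz inequality to the tail, viewing it as an $\ell^2$ inner product of the sequences $(f_\ell)_{\ell > n}$ and $(\varphi_\ell(\b{x}))_{\ell > n}$:
\begin{equation*}
  \abs[4]{\sum_{\ell=n+1}^\infty f_\ell \varphi_\ell(\b{x})} \leq \bigg( \sum_{\ell=n+1}^\infty f_\ell^2 \bigg)^{1/2} \bigg( \sum_{\ell=n+1}^\infty \varphi_\ell(\b{x})^2 \bigg)^{1/2}.
\end{equation*}
Both factors on the right are finite: the first because $\sum_{\ell=1}^\infty f_\ell^2 = \norm[0]{f}_{\mathcal{H}_K(\Omega)}^2 < \infty$ by~\eqref{eq:f-norm}, and the second because evaluating the kernel expansion~\eqref{eq:kernel-expansion} on the diagonal gives $\sum_{\ell=1}^\infty \varphi_\ell(\b{x})^2 = K(\b{x},\b{x}) < \infty$, so its tail converges as well.

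To finish, I would bound the coefficient factor by dropping the truncation, $\sum_{\ell=n+1}^\infty f_\ell^2 \leq \sum_{\ell=1}^\infty f_\ell^2 = \norm[0]{f}_{\mathcal{H}_K(\Omega)}^2$, and substitute to obtain the claimed inequality. Strictly speaking there is no genuinely hard step: the entire content of the estimate is the single Cauchy--Schwarz bound above. The only point meriting care is the preliminary observation that norm convergence of the orthonormal expansion upgrades to pointwise convergence, since this is precisely what legitimises the identity $f(\b{x}) - \sum_{\ell=1}^n f_\ell \varphi_\ell(\b{x}) = \sum_{\ell=n+1}^\infty f_\ell \varphi_\ell(\b{x})$ on which everything rests; the remaining manipulations are routine.
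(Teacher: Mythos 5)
Your argument is correct and is exactly the ``straightforward application of the Cauchy--Schwarz inequality'' that the paper itself gives as the entire proof of this lemma: identify the left-hand side with the tail $\sum_{\ell=n+1}^\infty f_\ell \varphi_\ell(\b{x})$, apply Cauchy--Schwarz in $\ell^2$, and enlarge $\sum_{\ell=n+1}^\infty f_\ell^2$ to $\norm[0]{f}_{\mathcal{H}_K(\Omega)}^2$. Your preliminary justification that norm convergence upgrades to pointwise convergence via the reproducing property is a detail the paper leaves implicit, but it is standard and does not change the approach.
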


\begin{theorem} \label{thm:uniform-error} Let $\mathcal{X}_n = \{ \b{x}_1, \ldots, \b{x}_n\} \subset \Omega$ be any points such that $\Phi_{\mathcal{X}_n}$ is invertible. Then for any $f \in \mathcal{H}_K(\Omega)$,
  \begin{equation} \label{eq:generic-bound}
    \sup_{ \b{x} \in \Omega} \, \abs[0]{ f(\b{x}) - s_f(\b{x})} \leq 2\norm[0]{f}_{\mathcal{H}_K(\Omega)} (1 + \Lambda_\varphi(\mathcal{X}_n)) \sup_{\b{x} \in \Omega} \bigg( \sum_{\ell=n+1}^\infty \varphi_\ell(\b{x})^2 \bigg)^{1/2}.
  \end{equation}
\end{theorem}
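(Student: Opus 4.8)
The plan is to insert the orthogonal truncation $P_n f \coloneqq \sum_{\ell=1}^n f_\ell \varphi_\ell$ of $f$ onto $\operatorname{span}\{\varphi_\ell\}_{\ell=1}^n$ and split the pointwise error by the triangle inequality as
\begin{equation*}
  \abs[0]{f(\b{x}) - s_f(\b{x})} \leq \abs[0]{f(\b{x}) - P_n f(\b{x})} + \abs[0]{P_n f(\b{x}) - P_n s_f(\b{x})} + \abs[0]{P_n s_f(\b{x}) - s_f(\b{x})}.
\end{equation*}
Writing $\sigma_n(\b{x}) = (\sum_{\ell=n+1}^\infty \varphi_\ell(\b{x})^2)^{1/2}$, the goal is to bound the first and third terms by $\norm[0]{f}_{\mathcal{H}_K(\Omega)} \sigma_n(\b{x})$ each and the middle term by $2 \Lambda_\varphi(\mathcal{X}_n) \norm[0]{f}_{\mathcal{H}_K(\Omega)} \sup_{\b{x}} \sigma_n(\b{x})$, which on summing produces the factor $2(1 + \Lambda_\varphi(\mathcal{X}_n))$.

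First I would dispatch the two outer terms, which are just the truncation errors of $f$ and of $s_f$. The first term is bounded directly by Lemma~\ref{lemma:g-lemma}. For the third term I apply the same lemma to $s_f \in \mathcal{H}_K(\Omega)$ and then use the minimum-norm property $\norm[0]{s_f}_{\mathcal{H}_K(\Omega)} \leq \norm[0]{f}_{\mathcal{H}_K(\Omega)}$ recorded after~\eqref{eq:minimum-norm}, so that $\abs[0]{s_f(\b{x}) - P_n s_f(\b{x})} \leq \norm[0]{s_f}_{\mathcal{H}_K(\Omega)} \sigma_n(\b{x}) \leq \norm[0]{f}_{\mathcal{H}_K(\Omega)} \sigma_n(\b{x})$.

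The hard part will be the middle term. The key observation is that $P_n f - P_n s_f = P_n(f - s_f)$ already lies in $\operatorname{span}\{\varphi_\ell\}_{\ell=1}^n$, so it is reproduced by the basis interpolation operator and therefore equals its own Lagrange interpolant,
\begin{equation*}
  P_n(f - s_f) = \sum_{k=1}^n \big[ P_n(f - s_f) \big](\b{x}_k)\, u_k^\varphi,
\end{equation*}
as in~\eqref{eq:lagrange-form}. Since $s_f$ interpolates $f$, the residual $f - s_f$ vanishes at every node, so each coefficient collapses to the truncation error of $f - s_f$ at that node, $[P_n(f - s_f)](\b{x}_k) = -[(f - s_f) - P_n(f - s_f)](\b{x}_k)$. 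Applying Lemma~\ref{lemma:g-lemma} to $f - s_f \in \mathcal{H}_K(\Omega)$ bounds the magnitude of this by $\norm[0]{f - s_f}_{\mathcal{H}_K(\Omega)} \sup_{\b{x}} \sigma_n(\b{x}) \leq 2 \norm[0]{f}_{\mathcal{H}_K(\Omega)} \sup_{\b{x}} \sigma_n(\b{x})$, the last step using the triangle inequality together with $\norm[0]{s_f}_{\mathcal{H}_K(\Omega)} \leq \norm[0]{f}_{\mathcal{H}_K(\Omega)}$. Summing the Lagrange representation against $\abs[0]{u_k^\varphi(\b{x})}$ and recognising $\sum_{k=1}^n \abs[0]{u_k^\varphi(\b{x})} \leq \Lambda_\varphi(\mathcal{X}_n)$ from~\eqref{eq:lebesgue-constant} then yields the bound $2 \Lambda_\varphi(\mathcal{X}_n) \norm[0]{f}_{\mathcal{H}_K(\Omega)} \sup_{\b{x}} \sigma_n(\b{x})$ on the middle term. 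Collecting the three contributions, taking the supremum over $\b{x} \in \Omega$, and using $1 + 2\Lambda_\varphi(\mathcal{X}_n) + 1 = 2(1 + \Lambda_\varphi(\mathcal{X}_n))$ completes the argument. The only genuinely delicate point is this middle term: one must notice that $P_n(f - s_f)$ is its own basis interpolant and that, although $P_n(f - s_f)$ need not vanish at the nodes even though $f - s_f$ does, its nodal values are exactly the controllable truncation errors of $f - s_f$.
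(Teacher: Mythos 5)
Your argument is correct, and its bookkeeping differs genuinely from the paper's. The paper first proves the intermediate estimate \eqref{eq:sphi-interpolant-error} for the basis interpolant $s_f^\varphi$, via the split $f-g$, $g-s_g^\varphi$ (which vanishes) and $s_g^\varphi-s_f^\varphi$ with $g = P_n f$, and then obtains the factor $2$ by applying that estimate twice in $\abs[0]{f-s_f} \leq \abs[0]{f-s_f^\varphi} + \abs[0]{s_f^\varphi-s_f}$, noting that $s_f^\varphi$ is simultaneously the basis interpolant of $s_f$ and that $\norm[0]{s_f}_{\mathcal{H}_K(\Omega)} \leq \norm[0]{f}_{\mathcal{H}_K(\Omega)}$; the constant arises as $(1+\Lambda_\varphi)+(1+\Lambda_\varphi)$. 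You instead split symmetrically around the truncations $P_n f$ and $P_n s_f$, so the two outer terms are pure truncation errors contributing $1+1$, and the Lebesgue constant enters only once, through the observation that $P_n(f-s_f)$ is its own Lagrange interpolant whose nodal values are exactly the truncation errors of $f-s_f$ at the nodes, contributing $2\Lambda_\varphi$ via $\norm[0]{f-s_f}_{\mathcal{H}_K(\Omega)} \leq 2\norm[0]{f}_{\mathcal{H}_K(\Omega)}$; the constant arises as $1+2\Lambda_\varphi+1$. Both routes use exactly the same ingredients (Lemma~\ref{lemma:g-lemma}, the Lagrange form \eqref{eq:lagrange-form} with the Lebesgue constant \eqref{eq:lebesgue-constant}, and the norm-minimality \eqref{eq:minimum-norm}) and land on the same constant. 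The paper's version has the side benefit of recording the bound \eqref{eq:sphi-interpolant-error} on $s_f^\varphi$ itself, which is of independent interest; yours has the side benefit that it could be sharpened for free, since $f-s_f$ is orthogonal to $\operatorname{span}\{K(\cdot,\b{x}_k)\}_{k=1}^n$ and hence $\norm[0]{f-s_f}_{\mathcal{H}_K(\Omega)}^2 = \norm[0]{f}_{\mathcal{H}_K(\Omega)}^2 - \norm[0]{s_f}_{\mathcal{H}_K(\Omega)}^2 \leq \norm[0]{f}_{\mathcal{H}_K(\Omega)}^2$, which would turn your constant into $2+\Lambda_\varphi(\mathcal{X}_n)$.
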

\begin{proof}
  Let $f = \sum_{\ell=1}^\infty f_\ell \varphi_\ell \in \mathcal{H}_K(\Omega)$ and define \sloppy{${g = \sum_{\ell=1}^n f_\ell \varphi_\ell}$}.
  Then
  \begin{equation*}
    \abs[0]{ f(\b{x}) - s_f^\varphi(\b{x}) } \leq \abs[0]{ f(\b{x}) - g(\b{x})} + \abs[0]{ g(\b{x}) - s_g^\varphi(\b{x}) } + \abs[0]{ s_g^\varphi(\b{x}) - s_f^\varphi(\b{x}) }.
  \end{equation*}
  The first term on the right-hand side can be bounded with Lemma~\ref{lemma:g-lemma}.
  The second term vanishes because $g \in \operatorname{span} \{ \varphi_\ell\}_{\ell=1}^n$ and $s_g^\varphi$ being the unique interpolant to $g$ in $\operatorname{span} \{ \varphi_\ell\}_{\ell=1}^n$ imply that $s_g^\varphi = g$.
  Finally, the Lagrange form~\eqref{eq:lagrange-form} and Lemma~\ref{lemma:g-lemma} yield a bound on the third term:
  \begin{equation*}
    \begin{split}
      \abs[0]{s_g^\varphi(\b{x}) - s_f^\varphi(\b{x})} &= \abs[4]{ \sum_{k=1}^n [ g(\b{x}_k) - f(\b{x}_k) ] u_k^\varphi(\b{x}) } \\
      &\leq \norm[0]{f}_{\mathcal{H}_K(\Omega)} \sum_{k=1}^n \abs[0]{u_k^\varphi(\b{x})} \bigg( \sum_{\ell=n+1}^\infty \varphi_\ell(\b{x}_k)^2 \bigg)^{1/2} \\
      &\leq \norm[0]{f}_{\mathcal{H}_K(\Omega)} \Lambda_\varphi(\mathcal{X}_n) \sup_{\b{x} \in \Omega} \bigg( \sum_{\ell=n+1}^\infty \varphi_\ell(\b{x})^2 \bigg)^{1/2}.
      \end{split}
  \end{equation*}
  Therefore,
  \begin{equation} \label{eq:sphi-interpolant-error}
    \abs[0]{ f(\b{x}) - s_f^\varphi(\b{x}) } \leq \norm[0]{f}_{\mathcal{H}_K(\Omega)} (1 + \Lambda_\varphi(\mathcal{X}_n)) \sup_{\b{x} \in \Omega} \bigg( \sum_{\ell=n+1}^\infty \varphi_\ell(\b{x})^2 \bigg)^{1/2}.
  \end{equation}
  To obtain a bound on $\abs[0]{f(\b{x}) - s_f(\b{x})}$ observe that
  \begin{equation*}
    \abs[0]{ f(\b{x}) - s_f(\b{x}) } \leq \abs[0]{ f(\b{x}) - s_f^\varphi(\b{x}) } + \abs[0]{ s_f^\varphi(\b{x}) - s_f(\b{x}) },
  \end{equation*}
  where, because $s_f^\varphi(\b{x}_k) = s_f(\b{x}_k) = f(\b{x}_k)$ for $k=1,\ldots,n$ and \sloppy{${\norm[0]{s_f}_{\mathcal{H}_K(\Omega)} \leq \norm[0]{f}_{\mathcal{H}_K(\Omega)}}$} by the norm-minimality property~\eqref{eq:minimum-norm}, both terms on the right-hand side obey the bound~\eqref{eq:sphi-interpolant-error}.
  The claim follows. \qed
\end{proof}

Proposition~\ref{eq:lebesgue-fekete} immediately yields an error estimate for any approximate Fekete points.

\begin{corollary} \label{corollary:error-fekete} Suppose that $f \in \mathcal{H}_K(\Omega)$ is interpolated at any approximate Fekete points~\eqref{eq:approximate-fekete}. Then
  \begin{equation} \label{eq:generic-bound-fekete}
    \sup_{ \b{x} \in \Omega} \, \abs[0]{ f(\b{x}) - s_f(\b{x})} \leq 2\norm[0]{f}_{\mathcal{H}_K(\Omega)} (1 + n) \sup_{\b{x} \in \Omega} \bigg( \sum_{\ell=n+1}^\infty \varphi_\ell(\b{x})^2 \bigg)^{1/2}.
  \end{equation}
\end{corollary}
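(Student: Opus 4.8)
The plan is to obtain this as an immediate consequence of Theorem~\ref{thm:uniform-error} combined with the Lebesgue constant bound in Proposition~\ref{eq:lebesgue-fekete}. Theorem~\ref{thm:uniform-error} already delivers the generic estimate~\eqref{eq:generic-bound} with the factor $1 + \Lambda_\varphi(\mathcal{X}_n)$ for \emph{any} point set at which $\Phi_{\mathcal{X}_n}$ is invertible, while Proposition~\ref{eq:lebesgue-fekete} controls the Lebesgue constant at the approximate Fekete points by $\Lambda_\varphi(\mathcal{X}_n^*) \leq n$. The entire argument therefore amounts to checking that the approximate Fekete points satisfy the invertibility hypothesis of the theorem and then substituting the Lebesgue bound into~\eqref{eq:generic-bound}.

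First I would verify that $\Phi_{\mathcal{X}_n^*}$ is invertible. This is exactly the observation already recorded below the definition~\eqref{eq:approximate-fekete}: since $\{\varphi_\ell\}_{\ell=1}^n$ are linearly independent there exists some point set $\mathcal{X}_n \subset \Omega$ with $\det \Phi_{\mathcal{X}_n} > 0$, and because $\mathcal{X}_n^*$ maximises $\det \Phi_{\mathcal{X}_n}$ over all $n$-point subsets of $\Omega$, we have $\det \Phi_{\mathcal{X}_n^*} \geq \det \Phi_{\mathcal{X}_n} > 0$. Hence $\Phi_{\mathcal{X}_n^*}$ is nonsingular, and Theorem~\ref{thm:uniform-error} applies to $\mathcal{X}_n = \mathcal{X}_n^*$.

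With invertibility in hand, I would apply Theorem~\ref{thm:uniform-error} to $f \in \mathcal{H}_K(\Omega)$ interpolated at $\mathcal{X}_n^*$, yielding~\eqref{eq:generic-bound} with $\Lambda_\varphi(\mathcal{X}_n^*)$ in place of $\Lambda_\varphi(\mathcal{X}_n)$. Proposition~\ref{eq:lebesgue-fekete} then gives $1 + \Lambda_\varphi(\mathcal{X}_n^*) \leq 1 + n$, and since the right-hand side of~\eqref{eq:generic-bound} is increasing in the Lebesgue constant, replacing $\Lambda_\varphi(\mathcal{X}_n^*)$ by the upper bound $n$ produces precisely~\eqref{eq:generic-bound-fekete}.

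There is essentially no obstacle here: the substance of the estimate resides in Theorem~\ref{thm:uniform-error} and Proposition~\ref{eq:lebesgue-fekete}, both of which I may invoke. The only point requiring a word of justification is the invertibility of $\Phi_{\mathcal{X}_n^*}$, and that follows immediately from the maximality of the determinant together with the linear independence of the truncated basis, as noted above.
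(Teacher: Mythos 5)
Your proposal is correct and follows exactly the paper's route: the corollary is stated there as an immediate consequence of Theorem~\ref{thm:uniform-error} together with the bound $\Lambda_\varphi(\mathcal{X}_n^*) \leq n$ from Proposition~\ref{eq:lebesgue-fekete}. Your extra remark on the invertibility of $\Phi_{\mathcal{X}_n^*}$ simply makes explicit what the paper already records after the definition~\eqref{eq:approximate-fekete}, so nothing is missing.
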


Due to the presence of a supremum on the right-hand side of~\eqref{eq:generic-bound} and~\eqref{eq:generic-bound-fekete} it is difficult to make the bounds explicitly dependent on, for example, smoothness of the kernel as is usual in the error analysis of radial basis function interpolants~\citep[Chapter~11]{Wendland2005}.
One would ideally select a basis $\{\varphi_\ell\}_{\ell=1}^\infty$ that minimises the supremum in~\eqref{eq:generic-bound-fekete}. This seems challenging, so in practice selection of the basis is dictated by convenience, that is, by one's ability to derive an explicit bound for the supremum and the ease of implementation of the optimisation problem~\eqref{eq:approximate-fekete}.

\subsection{Improved error estimates in subspaces} \label{sec:superconvergence}

It is known that the rate of convergence of kernel interpolation can be improved if the function being interpolated lives in a subset of the RKHS. The existing results in \mbox{\citep{Schaback1999,Schaback2000,Schaback2018}} and \citep[Section~11.5]{Wendland2005} are particularly interesting when the kernel is finitely smooth\footnote{\citet[p.\ 192]{Wendland2005} goes as far as describing these results ``almost pointless'' for kernels, such as the Gaussian, that are associated with exponential rates of convergence.}. Roughly speaking, in this case a typical algebraic rate of convergence is ``doubled'' for sufficiently smooth elements of the RKHS.
Specifically, let $\mu$ be a Borel measure on $\Omega$ that assigns positive measure to every open set and let $\{\psi_\ell\}_{\ell=1}^\infty$ and $(\lambda_\ell)_{\ell=1}^\infty$ be the eigenfunctions and the positive decreasing eigenvalues of the integral operator $Tf(\b{x}) = \int_\Omega K(\b{x},\b{y}) f(\b{y}) \dif \mu(\b{y})$. By Mercer's theorem~\citep[e.g.,][]{Sun2005},
\begin{equation*}
  \mathcal{H}_K(\Omega) = \Set[\Bigg]{ f \in L^2(\mu) }{ \norm[0]{f}_{\mathcal{H}_K(\Omega)}^2 = \sum_{\ell=1}^\infty \frac{ \inprod{f}{\psi_\ell}_{L^2(\mu)}^2}{\lambda_\ell} < \infty }.
\end{equation*}
The standard improved error estimate states that for $f \in \mathcal{H}_K(\Omega)$ such that $f = Tv$ for some $v \in L^2(\mu)$ the bound~\eqref{eq:P-error-bound} is improved to
\begin{equation} \label{eq:improved-bound}
  \abs[0]{f(\b{x}) - s_f(\b{x})} \leq \norm[0]{v}_{L^2(\mu)} P_{\mathcal{X}_n}(\b{x}) \norm[0]{P_{\mathcal{X}_n}}_{L^2(\mu)}.
\end{equation}
Because the range of $T$ is
\begin{equation*}
  T(L^2(\mu)) = \Set[\Bigg]{ f \in L^2(\mu) }{ \norm[0]{f}_{\mathcal{H}_K(\Omega)}^2 = \sum_{\ell=1}^\infty \frac{ \inprod{f}{\psi_\ell}_{L^2(\mu)}^2}{\lambda_\ell^2} < \infty } \subset \mathcal{H}_K(\Omega),
\end{equation*}
the collection of functions for which~\eqref{eq:improved-bound} holds is a subset of the RKHS.
Theorem~\ref{thm:superconvergence} below is significantly more flexible than this result and does not require that the Mercer expansion be used.

Let $(\alpha_\ell)_{\ell=1}^\infty$ be a positive, increasing, and divergent sequence and define the subspace
\begin{equation*}
  \mathcal{H}_K^\alpha(\Omega) = \Set[\Bigg]{ f = \sum_{\ell=1}^\infty f_\ell \varphi_\ell }{ \norm[0]{f}_{\mathcal{H}_K^\alpha(\Omega)}^2 = \sum_{\ell = 1}^\infty \alpha_\ell^2 f_\ell^2 < \infty } \subset \mathcal{H}_K(\Omega).
\end{equation*}
For simplicity we also assume that $\alpha_1 \geq 1$, which can always be achieved using a scaling that does not affect $\mathcal{H}_K^\alpha(\Omega)$ as a set.
It is easy to verify that $\mathcal{H}_K^\alpha(\Omega)$ is an RKHS and that its reproducing kernel is
\begin{equation*}
  K^\alpha(\b{x},\b{y}) = \sum_{\ell=1}^\infty \frac{1}{\alpha_\ell^2} \, \varphi_\ell(\b{x}) \varphi_\ell(\b{y}).
\end{equation*}

\begin{theorem} \label{thm:superconvergence} Suppose that $f \in \mathcal{H}_K^\alpha(\Omega)$ is interpolated at any approximate Fekete points~\eqref{eq:approximate-fekete}. Then
  \begin{equation*}
    \sup_{\b{x} \in \Omega} \, \abs[0]{ f(\b{x}) - s_f(\b{x})} \leq 2\norm[0]{f}_{\mathcal{H}_K^\alpha(\Omega)} (1 + n) \alpha_{n+1}^{-1} \sup_{\b{x} \in \Omega} \bigg( \sum_{\ell=n+1}^\infty \varphi_\ell(\b{x})^2 \bigg)^{1/2}.
  \end{equation*}
\end{theorem}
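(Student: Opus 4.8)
The plan is to run the proof of Theorem~\ref{thm:uniform-error} essentially verbatim, the single change being that the tail estimate of Lemma~\ref{lemma:g-lemma} is sharpened by exploiting membership of $f$ in the finer space $\mathcal{H}_K^\alpha(\Omega)$. First I would record the weighted analogue of Lemma~\ref{lemma:g-lemma}: for $f = \sum_{\ell=1}^\infty f_\ell \varphi_\ell \in \mathcal{H}_K^\alpha(\Omega)$, writing $\sum_{\ell > n} f_\ell \varphi_\ell(\b{x}) = \sum_{\ell > n} (\alpha_\ell f_\ell)\, \alpha_\ell^{-1}\varphi_\ell(\b{x})$ and applying the Cauchy--Schwarz inequality together with the monotonicity $\alpha_\ell \geq \alpha_{n+1}$ for $\ell \geq n+1$ yields
\[
  \abs[4]{ f(\b{x}) - \sum_{\ell=1}^n f_\ell \varphi_\ell(\b{x}) } \leq \norm[0]{f}_{\mathcal{H}_K^\alpha(\Omega)}\, \alpha_{n+1}^{-1} \bigg( \sum_{\ell=n+1}^\infty \varphi_\ell(\b{x})^2 \bigg)^{1/2}.
\]
This is the only place the factor $\alpha_{n+1}^{-1}$ is produced: it simply replaces $\norm[0]{f}_{\mathcal{H}_K(\Omega)}$ by $\alpha_{n+1}^{-1}\norm[0]{f}_{\mathcal{H}_K^\alpha(\Omega)}$ wherever Lemma~\ref{lemma:g-lemma} was previously invoked.

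Next, exactly as in Theorem~\ref{thm:uniform-error}, I would write $f - s_f = (f - s_f^\varphi) + (s_f^\varphi - s_f)$, where $s_f^\varphi$ is the auxiliary interpolant~\eqref{eq:auxiliary-interpolant}. For the first summand the argument is unchanged: with $g = \sum_{\ell=1}^n f_\ell \varphi_\ell$ one has $s_g^\varphi = g$, so that $f - s_f^\varphi = (f-g) + (s_g^\varphi - s_f^\varphi)$; bounding $f-g$ and each nodal value $(f-g)(\b{x}_k)$ by the weighted estimate above, using the Lagrange form~\eqref{eq:lagrange-form} and the Lebesgue-constant bound $\Lambda_\varphi(\mathcal{X}_n^*) \leq n$ from Proposition~\ref{eq:lebesgue-fekete}, gives
\[
  \sup_{\b{x} \in \Omega} \abs[0]{ f(\b{x}) - s_f^\varphi(\b{x}) } \leq \norm[0]{f}_{\mathcal{H}_K^\alpha(\Omega)} (1+n)\, \alpha_{n+1}^{-1} \sup_{\b{x} \in \Omega} \bigg( \sum_{\ell=n+1}^\infty \varphi_\ell(\b{x})^2 \bigg)^{1/2},
\]
which is exactly half of the asserted bound.

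The second summand $s_f^\varphi - s_f$ is where I expect the real difficulty to lie. Because $s_f^\varphi(\b{x}_k) = s_f(\b{x}_k) = f(\b{x}_k)$ at every Fekete point, $s_f^\varphi$ is also the auxiliary interpolant of the genuine kernel interpolant $s_f$, so $s_f^\varphi - s_f$ is the auxiliary-interpolation error of $s_f$. The naive continuation—running the first-summand estimate with $s_f$ in place of $f$—would bound it by $\norm[0]{s_f}_{\mathcal{H}_K^\alpha(\Omega)}(1+n)\alpha_{n+1}^{-1}\sup_{\b{x}}(\sum_{\ell>n}\varphi_\ell(\b{x})^2)^{1/2}$ and would close the proof were it true that $\norm[0]{s_f}_{\mathcal{H}_K^\alpha(\Omega)} \leq \norm[0]{f}_{\mathcal{H}_K^\alpha(\Omega)}$. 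This is precisely the obstacle: the norm-minimality~\eqref{eq:minimum-norm} of $s_f$ is a statement in $\mathcal{H}_K(\Omega)$ and supplies only $\norm[0]{s_f}_{\mathcal{H}_K(\Omega)} \leq \norm[0]{f}_{\mathcal{H}_K(\Omega)}$, whereas the corresponding inequality in the finer norm is not automatic—indeed $s_f$, a finite combination of the translates $K(\cdot,\b{x}_k)$, need not lie in $\mathcal{H}_K^\alpha(\Omega)$ at all, so the weighted estimate cannot be applied to it directly.

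To circumvent this I would avoid $\norm[0]{s_f}_{\mathcal{H}_K^\alpha(\Omega)}$ entirely and use instead that $s_f$ is the $\mathcal{H}_K(\Omega)$-orthogonal projection of $f$ onto $\operatorname{span}\{K(\cdot,\b{x}_k)\}_{k=1}^n$, so that $s_f^\varphi - s_f$ lies in the orthogonal complement of that span. Since $s_f^\varphi \in \operatorname{span}\{\varphi_\ell\}_{\ell=1}^n$, the reproducing property then gives
\[
  s_f^\varphi(\b{x}) - s_f(\b{x}) = \inprod[\big]{ s_f^\varphi }{ K(\cdot,\b{x}) - \sum_{k=1}^n u_k(\b{x}) K(\cdot,\b{x}_k) }_{\mathcal{H}_K(\Omega)} = \sum_{\ell=1}^n (s_f^\varphi)_\ell\, R_\ell(\b{x}),
\]
a finite sum over $\ell \leq n$ of the kernel-interpolation residuals $R_\ell = \varphi_\ell - s_{\varphi_\ell}$ weighted by the coordinates of $s_f^\varphi$. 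Each $R_\ell$ is the interpolation error of a unit-norm element of $\mathcal{H}_K(\Omega)$, so Corollary~\ref{corollary:error-fekete} bounds $\sup_{\b{x}}\abs[0]{R_\ell(\b{x})}$ by $2(1+n)\sup_{\b{x}}(\sum_{\ell'>n}\varphi_{\ell'}(\b{x})^2)^{1/2}$ and the residuals already carry the correct tail factor. The genuinely delicate remaining point—and the step I expect to cost the most—is to control $\sum_{\ell=1}^n \abs[0]{(s_f^\varphi)_\ell}$ by $\alpha_{n+1}^{-1}\norm[0]{f}_{\mathcal{H}_K^\alpha(\Omega)}$ (up to the admissible polynomial factor in $n$), so that this summand matches the first and the two combine to the factor $2$ in the claim.
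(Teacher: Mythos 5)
Your first display and your first-summand estimate reproduce the paper's proof in its entirety: the paper proves exactly your weighted analogue of Lemma~\ref{lemma:g-lemma} and then simply asserts that the proof of Theorem~\ref{thm:uniform-error}, together with $\norm[0]{f}_{\mathcal{H}_K(\Omega)} \leq \norm[0]{f}_{\mathcal{H}_K^\alpha(\Omega)}$, yields the claim. The difficulty you flag in the second summand is therefore precisely the point the paper elides, and your instinct is sound --- but the patch you propose cannot be completed. The coefficient bound you would need, $\sum_{\ell=1}^n \abs[0]{(s_f^\varphi)_\ell} \lesssim \alpha_{n+1}^{-1} \norm[0]{f}_{\mathcal{H}_K^\alpha(\Omega)}$, is false: take $f = \varphi_1$ (the first basis function; $\varphi_0$ in the zero-based indexing of Section~\ref{sec:gauss-kernel}). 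Then $s_f^\varphi = f$, so the left-hand side equals $1$, while the right-hand side is a multiple of $\alpha_1/\alpha_{n+1}$, which can be made arbitrarily small because the approximate Fekete points do not depend on $(\alpha_\ell)$ and the sequence may grow arbitrarily fast. Worse, the same example shows that the second summand admits \emph{no} bound carrying the factor $\alpha_{n+1}^{-1}$ at all: for $f = \varphi_1$ one has $s_f^\varphi - s_f = \varphi_1 - s_{\varphi_1}$, a fixed function that is in general not identically zero. For instance, for the Gaussian kernel with $n = 2$ the approximate Fekete points are $\pm 1/(2\varepsilon)$, and $K(x, \pm 1/(2\varepsilon)) = \e^{-1/4}\e^{-\varepsilon^2 x^2}\e^{\pm \varepsilon x}$, so $\varphi_0(x) = \e^{-\varepsilon^2 x^2} \in \operatorname{span}\{K(\cdot, \pm 1/(2\varepsilon))\}$ would require $1 \in \operatorname{span}\{\e^{\varepsilon x}, \e^{-\varepsilon x}\}$, which is impossible; hence $\sup_{x \in \Omega} \abs[0]{s_f^\varphi(x) - s_f(x)}$ is a positive constant independent of $(\alpha_\ell)$, whereas the target bound decays like $\alpha_{n+1}^{-1}$.

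This says something stronger than that your route fails: no argument can reach the statement as printed for the kernel interpolant $s_f$. Writing $\tau_n = \sup_{\b{x} \in \Omega} \big( \sum_{\ell=n+1}^\infty \varphi_\ell(\b{x})^2 \big)^{1/2}$, your (correct) first-summand bound gives $\abs[0]{f - s_f^\varphi} \leq \norm[0]{f}_{\mathcal{H}_K^\alpha(\Omega)} (1+n) \alpha_{n+1}^{-1} \tau_n$, while the only legitimate treatment of the second summand --- the paper's own move of applying~\eqref{eq:sphi-interpolant-error} to $s_f$ and invoking $\norm[0]{s_f}_{\mathcal{H}_K(\Omega)} \leq \norm[0]{f}_{\mathcal{H}_K(\Omega)} \leq \norm[0]{f}_{\mathcal{H}_K^\alpha(\Omega)}$ --- produces $\norm[0]{f}_{\mathcal{H}_K^\alpha(\Omega)}(1+n)\tau_n$ \emph{without} the gain, so the paper's argument actually delivers $\norm[0]{f}_{\mathcal{H}_K^\alpha(\Omega)}(1+n)(1+\alpha_{n+1}^{-1})\tau_n$ rather than the stated bound, and the $f = \varphi_1$ example shows the lost factor is not an artifact of the method: as stated, for arbitrary increasing divergent $(\alpha_\ell)$, the theorem's bound fails, though it is true with $s_f$ replaced by the auxiliary interpolant $s_f^\varphi$ (your first-summand estimate, even without the factor $2$). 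In summary: your reconstruction is faithful to the paper up to exactly the step where the paper's one-line proof cannot be checked; your diagnosis of the obstruction (norm-minimality is an $\mathcal{H}_K(\Omega)$ statement, and $s_f$ need not lie in $\mathcal{H}_K^\alpha(\Omega)$ or have controlled $\alpha$-norm) is correct; your residual identity $s_f^\varphi - s_f = \sum_{\ell=1}^n (s_f^\varphi)_\ell R_\ell$ is algebraically valid; but the closing step you defer to is provably unattainable, so the proposal has a genuine --- and, for this statement as printed, unfixable --- gap.
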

\begin{proof}
  When $f \in \mathcal{H}_K^\alpha(\Omega)$, we replace the estimate of Lemma~\ref{lemma:g-lemma} with the following estimate:
  \begin{equation*}
  \begin{split}
    \abs[4]{ f(\b{x}) - \sum_{\ell=1}^{n} f_\ell \varphi_\ell(\b{x}) }^2 &= \abs[4]{ \sum_{\ell=n+1}^\infty \alpha_\ell f_\ell \alpha_\ell^{-1} \varphi_\ell(\b{x}) }^2 \\
    &\leq \bigg( \sum_{\ell=n+1}^\infty \alpha_\ell^2 f_\ell^2 \bigg) \bigg( \sum_{\ell=n+1}^\infty \alpha_\ell^{-2} \varphi_\ell(\b{x})^2 \bigg) \\
    &\leq \norm[0]{f}_{\mathcal{H}_K^\alpha(\Omega)}^2 \alpha_{n+1}^{-2} \sum_{\ell = n+1}^\infty \varphi_\ell(\b{x})^2.
    \end{split}
  \end{equation*}
  The proof of Theorem~\ref{thm:uniform-error} and the fact that $\norm[0]{f}_{\mathcal{H}_K(\Omega)} \leq \norm[0]{f}_{\mathcal{H}_K^\alpha(\Omega)}$, which follows from our assumption $\alpha_\ell \geq 1$ for every $\ell$, then yield the claimed uniform bound. \qed
\end{proof}

\section{Gaussian kernel} \label{sec:gauss-kernel}

The $d$-dimensional anisotropic Gaussian kernel
\begin{equation} \label{eq:gauss-multi}
  K(\b{x}, \b{y}) = \exp \bigg( \! - \sum_{i=1}^d \varepsilon_i^2 (x_i - y_i)^2 \bigg)
\end{equation}
with scale parameters $\varepsilon_i > 0$ has the orthonormal expansion
\begin{equation*}
  \begin{split}
    K(\b{x}, \b{y}) &= \sum_{ \b{\alpha} \in \N_0^d } \Bigg( \sqrt{ \frac{2^{\abs[0]{\b{\alpha}}} \b{\varepsilon}^{\b{\alpha}}}{\b{\alpha}!}} \b{x}^{\b{\alpha}} \exp\bigg(\!-\sum_{i=1}^d \varepsilon_i^2 x_i^2 \bigg) \Bigg) \\
    &\hspace{2cm}\times\Bigg( \sqrt{ \frac{2^{\abs[0]{\b{\alpha}}} \b{\varepsilon}^{\b{\alpha}}}{\b{\alpha}!}} \b{y}^{\b{\alpha}} \exp\bigg(\!-\sum_{i=1}^d \varepsilon_i^2 y_i^2 \bigg) \Bigg) \\
    &\eqqcolon \sum_{ \b{\alpha} \in \N_0^d } \varphi_{\b{\alpha}}(\b{x}) \varphi_{\b{\alpha}}(\b{y}),
    \end{split}
\end{equation*}
where $\N_0^d$ is the collection of $d$-dimensional non-negative multi-indices $\b{\alpha}$, $\abs[0]{\b{\alpha}} = \alpha_1 + \cdots + \alpha_d$, $\b{\alpha}! = \alpha_1! \times \cdots \times \alpha_d!$, and $\b{z}^{\b{\alpha}} = z_i^{\alpha_1} \times \cdots \times z_d^{\alpha_d}$ for any $\b{z} \in \R^d$.
This expansion can be verified via a straightforward calculation.
The RKHS of~\eqref{eq:gauss-multi} is thus
\begin{equation*}
  \mathcal{H}_K(\Omega) = \Set[\Bigg]{ f(\b{x}) = \sum_{ \b{\alpha} \in \N_0^d } f_{\b{\alpha}} \sqrt{ \frac{2^{\abs[0]{\b{\alpha}}} \b{\varepsilon}^{\b{\alpha}}}{\b{\alpha}!}} \b{x}^{\b{\alpha}} \exp\bigg(\!-\sum_{i=1}^d \varepsilon_i^2 x_i^2 \bigg) }{ \sum_{ \b{\alpha} \in \N_0^d } f_{\b{\alpha}}^2 < \infty}.
\end{equation*}
However, for the most of this section we set $d=1$ and consider the one-dimensional Gaussian kernel
\begin{equation} \label{eq:gauss-kernel}
  K(x,y) = \exp\big(\!-\varepsilon^2 (x-y)^2 \big)
\end{equation}
with a single scale parameter $\varepsilon > 0$.
The orthonormal expansion and the RKHS are then\footnote{Observe that in this section we begin indexing of the expansion from zero to simplify notation.}
\begin{equation} \label{eq:gauss-expansion}
  \begin{split}
  K(x, y) &= \sum_{\ell=0}^\infty \bigg( \sqrt{\frac{2^\ell \varepsilon^{2\ell}}{\ell!}} x^\ell \exp(-\varepsilon^2 x^2) \bigg) \bigg( \sqrt{\frac{2^\ell \varepsilon^{2\ell}}{\ell!}} y^\ell \exp(-\varepsilon^2 y^2) \bigg) \\
  &\eqqcolon \sum_{\ell=0}^\infty \varphi_\ell(x) \varphi_\ell(y)
  \end{split}
\end{equation}
and
\begin{equation} \label{eq:gauss-rkhs}
  \mathcal{H}_K(\Omega) = \Set[\Bigg]{ f(x) = \sum_{\ell=0}^\infty f_\ell \sqrt{\frac{2^\ell \varepsilon^{2\ell}}{\ell!}} x^\ell \exp(-\varepsilon^2 x^2) }{ \sum_{\ell=0}^\infty f_\ell^2 < \infty}.
\end{equation}
The above results and other properties of the Gaussian kernel and its RKHS are studied in more detail in \citep{Steinwart2006,Minh2010} and~\citep[Section~4]{DeMarchiSchaback2009}.
In Section~\ref{sec:fekete-gaussian} we show that, owing to the special structure of the above basis functions and the resulting convenient factorisation of $\det \Phi_{\mathcal{X}_n}$, the approximate Fekete points for the one-dimensional Gaussian kernel are solved from a convex optimisation problem.
Note that most prior work, such as \citep{Tanaka2019,Belhadji2019}, uses a well-known Mercer expansion of the Gaussian kernel instead of~\eqref{eq:gauss-expansion}.
This expansion is
\begin{equation} \label{eq:gauss-mercer}
  K(x,y) = \sum_{\ell=0}^\infty \lambda_\ell^\sigma \psi_\ell^\sigma(x) \psi_\ell^\sigma(y),
\end{equation}
where the eigenfunctions are orthonormal with respect to the Gaussian measure with variance $\sigma^2$:
\begin{equation*}
  \frac{1}{\sqrt{2 \pi \sigma^2}} \int_\R \psi_\ell^\sigma(x) \psi_k^\sigma(x) \exp\bigg(\!-\frac{x^2}{2\sigma^2} \bigg) \dif x = \delta_{\ell k}.
\end{equation*}
The eigenfunctions and values are~\citep{FasshauerMcCourt2012}
\begin{equation*}
  \psi_\ell^\sigma(x) = \sqrt{\frac{\beta}{\ell!}} \e^{-\delta^2 x^2} \mathrm{H}_\ell\big( \sqrt{2}\alpha \beta x \big) \: \text{ and } \: \lambda_\ell^\sigma = \sqrt{\frac{\alpha^2}{\alpha^2 + \delta^2 + \varepsilon^2}} \bigg( \frac{\varepsilon^2}{\alpha^2 + \delta^2 + \varepsilon^2} \bigg)^\ell,
\end{equation*}
where $\mathrm{H}_\ell$ is the $\ell$th probabilists' Hermite polynomial and the constants are
\begin{equation*}
  \alpha = \frac{1}{\sqrt{2} \sigma}, \quad \beta = (1 + 8\varepsilon^2 \sigma^2 )^{1/4} \quad \text{ and } \quad \delta^2 = \frac{1}{4\sigma^2}( \beta^2 - 1).
\end{equation*}
The Mercer expansion~\eqref{eq:gauss-mercer} can be then verified by inserting
\begin{equation*}
  \rho = \frac{\varepsilon^2}{\alpha^2+\delta^2+\varepsilon^2} \quad \text{ and } \quad \gamma = \sqrt{2} \alpha \beta 
\end{equation*}
into the Mehler formula
\begin{equation*}
  \exp\bigg( \!-\frac{\rho^2\gamma^2(x^2+y^2)-2\rho\gamma^2 xy}{2(1-\rho^2)}\bigg) = \sqrt{1-\rho^2} \sum_{\ell=0}^\infty \frac{\rho^\ell}{\ell!} \, \mathrm{H}_\ell(\gamma x) \, \mathrm{H}_\ell(\gamma y),
\end{equation*}
and multiplying both sides with
\begin{equation*}
   \exp\bigg(\!-\frac{\rho\gamma^2}{2(1+\rho)} (x^2+y^2) \bigg)= \exp\big(\!-\delta^2(x^2+y^2)\big).
\end{equation*}
The expansion~\eqref{eq:gauss-expansion} used in this article is evidently much simpler to work with.

\subsection{Approximate Fekete points via convex optimisation} \label{sec:fekete-gaussian}

Let
\begin{equation*}
  \begin{split}
  \widehat{K}(x,y) &= \sum_{\ell=0}^{n-1} \bigg( \sqrt{\frac{2^\ell \varepsilon^{2\ell}}{\ell!}} x^\ell \exp(-\varepsilon^2 x^2) \bigg) \bigg( \sqrt{\frac{2^\ell \varepsilon^{2\ell}}{\ell!}} y^\ell \exp(-\varepsilon^2 y^2) \bigg) \\
  &= \sum_{\ell=0}^{n-1} \varphi_\ell(x) \varphi_\ell(y)
  \end{split}
\end{equation*}
be the truncation of the Gaussian kernel~\eqref{eq:gauss-kernel} and $\widehat{\mathcal{K}}_{\mathcal{X}_n} = (\widehat{K}(x_k,x_m))_{k,m=1}^n \in \R^{n \times n}$ the corresponding kernel matrix. Define the matrices
\begin{equation*}
  \Phi_{\mathcal{X}_n} = \begin{bmatrix} \varphi_0(x_1) & \cdots & \varphi_{n-1}(x_1) \\ \vdots & \ddots & \vdots \\ \varphi_0(x_n) & \cdots & \varphi_{n-1}(x_n) \end{bmatrix} \quad \text{ and } \quad \mathcal{V}_{\mathcal{X}_n} = \begin{bmatrix} 1 & x_1 & \cdots & x_1^{n-1} \\ \vdots & \vdots & \ddots & \vdots \\ 1 & x_n & \cdots & x_n^{n-1} \end{bmatrix},
\end{equation*}
the latter of which is the classical Vandermonde matrix.
Since $\widehat{\mathcal{K}}_{\mathcal{X}_n} = \Phi_{\mathcal{X}_n} \Phi_{\mathcal{X}_n}^\T$ and the $k$th row of the matrix $\Phi_{\mathcal{X}_n}$ is that of the matrix $\mathcal{V}_{\mathcal{X}_n}$ multiplied by $(2^{k-1} \varepsilon^{2(k-1)}/(k-1)!)^{1/2} \exp(-\varepsilon^2 x_k^2)$, we have
\begin{equation*}
  \begin{split}
    \big( \det \widehat{\mathcal{K}}_{\mathcal{X}_n} \big)^{1/2} &= \abs[0]{ \det \Phi_{\mathcal{X}_n} } \\
    &= \bigg( \prod_{\ell=0}^{n-1} \frac{2^\ell \varepsilon^{2\ell}}{\ell!} \bigg)^{1/2}  \exp\bigg( \! -\varepsilon^2 \sum_{k=1}^n x_k^2 \bigg) \abs[0]{ \det \mathcal{V}_{\mathcal{X}_n} } \\
  &= \bigg( \prod_{\ell=0}^{n-1} \frac{2^\ell \varepsilon^{2\ell}}{\ell!} \bigg)^{1/2} \exp\bigg( \! -\varepsilon^2 \sum_{k=1}^n x_k^2 \bigg) \abs[3]{ \prod_{1 \leq i < j \leq n} (x_i - x_j) },
  \end{split}
\end{equation*}
where the last equation uses the standard explicit expression for the Vandermonde determinant.
This expression verifies that $\widehat{\mathcal{K}}_{\mathcal{X}_n}$ and $\Phi_{\mathcal{X}_n}$ are invertible whenever the points are distinct.
Define
\begin{equation} \label{eq:W-function}
  W(x_1, \ldots, x_n) = \exp\bigg( \! -\varepsilon^2 \sum_{k=1}^n x_k^2 \bigg) \abs[3]{ \prod_{1 \leq i < j \leq n} (x_i - x_j) }.
\end{equation}
The approximate Fekete points~\eqref{eq:approximate-fekete} for the Gaussian kernel are thus seen to be
\begin{equation} \label{eq:approximate-fekete-gaussian}
  \begin{split}
  \mathcal{X}_n^* = \{ x_1^*, \ldots, x_n^*\} &\in \argmax_{ \mathcal{X}_n = \{ x_1, \ldots, x_n\} \subset \Omega} \det \widehat{\mathcal{K}}_X \\
  &= \argmax_{ \mathcal{X}_n = \{ x_1, \ldots, x_n\} \subset \Omega} W(x_1, \ldots, x_n).
  \end{split}
\end{equation}
Maximisation of $W(x_1,\ldots,x_n)$ is equivalent to minimisation of the energy
\begin{equation*}
  \begin{split}
  I(x_1, \ldots, x_n) = - \log W(x_1, \ldots, x_n) &= \varepsilon^2 \sum_{k=1}^n x_k^2 + \sum_{1 \leq i < j \leq n} \log \frac{1}{\abs[0]{x_i - x_j}} \\
  &= \sum_{k=1}^n Q_\varepsilon(x_k) + \sum_{1 \leq i < j \leq n} N(x_i-x_j),
  \end{split}
\end{equation*}
where $Q_\varepsilon(x) = \varepsilon^2 x$ and $N(x) = 1/\log\abs[0]{x}$.
To ensure that $I$ is well-defined and to eliminate non-uniqueness arising from ordering of the points, define the simplex
\begin{equation*}
  \mathcal{R}_n = \Set[\big]{(x_1, \ldots, x_n) \in \Omega^n}{ x_1 < x_2 < \cdots < x_{n-1} < x_n} \subset \Omega^n
\end{equation*}
and consider $I$ as a function defined on $\mathcal{R}_n$.
Adaptation of the proof of Theorem~3.3 of \citet{TanakaSugihara2018} shows that the objective function $I$ is convex and that there exists a unique minimiser \sloppy{${\mathcal{X}_n^* \in \mathcal{R}_n}$}.

\begin{proposition} \label{prop:convex} If $\Omega \subset \R$ is a closed interval, then the energy function $I \colon \mathcal{R}^n \to \R$ is convex and has a unique minimiser.
\end{proposition}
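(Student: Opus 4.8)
The plan is to exploit the additive structure of $I$ on the simplex $\mathcal{R}_n$, where the strict ordering $x_1 < \cdots < x_n$ makes every difference $x_j - x_i$ (for $i < j$) strictly positive, so that $\lvert x_i - x_j \rvert = x_j - x_i$ and
\[
  I(x_1, \ldots, x_n) = \varepsilon^2 \sum_{k=1}^n x_k^2 - \sum_{1 \leq i < j \leq n} \log(x_j - x_i).
\]
First I would record that the domain is convex: since $\Omega$ is a closed interval, $\Omega^n$ is a convex subset of $\R^n$, and $\mathcal{R}_n$ is its intersection with the half-spaces $\{x_i < x_{i+1}\}$, hence convex. All the subsequent convexity arguments then take place on this convex set.

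Next I would establish strict convexity of $I$ by treating the two sums separately. The quadratic part $\varepsilon^2 \sum_{k=1}^n x_k^2$ has Hessian equal to $2\varepsilon^2$ times the identity matrix, which is positive definite, so it is strictly convex. For the interaction part, each summand $-\log(x_j - x_i)$ is the composition of the convex function $t \mapsto -\log t$ with the affine map $(x_1, \ldots, x_n) \mapsto x_j - x_i$, which is strictly positive on $\mathcal{R}_n$; composing a convex function with an affine map preserves convexity, so every summand is convex. (Equivalently, its Hessian is $(x_j - x_i)^{-2}$ times the rank-one positive semidefinite matrix carrying $+1$ in the diagonal positions $(i,i),(j,j)$ and $-1$ in the off-diagonal positions $(i,j),(j,i)$.) Adding a strictly convex function and finitely many convex functions yields a strictly convex $I$ on $\mathcal{R}_n$.

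Strict convexity immediately gives uniqueness, as $I$ has at most one minimiser on the convex set $\mathcal{R}_n$. For existence I would transfer the problem back to the objective $W$ of~\eqref{eq:W-function}. The function $W$ is continuous on the compact set $\Omega^n$ and, by the Vandermonde factorisation preceding~\eqref{eq:approximate-fekete-gaussian}, is strictly positive whenever the coordinates are distinct; since such configurations exist in $\Omega^n$, the maximum of $W$ over $\Omega^n$ is strictly positive. Any maximiser therefore has pairwise distinct coordinates, and sorting them produces a point of $\mathcal{R}_n$ that maximises $W$ and hence minimises $I = -\log W$. Combining existence with the uniqueness afforded by strict convexity completes the proof.

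The main obstacle I anticipate is the existence argument rather than the convexity: because the interaction terms blow up as any two points merge, $I$ is neither continuous nor even finite on the part of $\partial \mathcal{R}_n$ where two coordinates coincide, so one cannot directly invoke the extreme value theorem for $I$ on a compact domain. The cleanest route is to argue through the everywhere-defined continuous surrogate $W$ and use the positivity of its maximum to rule out coalescing points, which is precisely where the explicit Vandermonde factorisation does the essential work. (An alternative is to extend $I$ by $+\infty$ to the compact closure $\overline{\mathcal{R}_n}$, observe that the extension is lower semicontinuous and finite somewhere, and conclude that its infimum is attained in $\mathcal{R}_n$.)
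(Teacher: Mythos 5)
Your proof is correct. The existence/uniqueness half is essentially the paper's own argument: the authors pass to $J(\mathcal{X}_n)=\exp(-I(\mathcal{X}_n))$, which is $W$ up to a positive constant, extend it by zero to the closure of $\mathcal{R}_n$, and use compactness plus positivity on $\mathcal{R}_n$ to conclude that a maximiser exists and lies in the open simplex --- exactly your route through $W$ and the Vandermonde factorisation, and your closing remark correctly identifies why one cannot apply the extreme value theorem to $I$ directly. Where you genuinely diverge is the convexity half. The paper computes the full Hessian of $I$, with entries $Q_\varepsilon''(x_i)+\sum_{k\neq i}N''(x_i-x_k)$ on the diagonal and $-N''(x_i-x_j)$ off it, and invokes strict diagonal dominance to get positive definiteness. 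You instead decompose $I$ into the strictly convex quadratic $\varepsilon^2\sum_k x_k^2$ plus the terms $-\log(x_j-x_i)$, each convex as a convex function of an affine (rank-one) argument, and conclude strict convexity of the sum. Your decomposition is more elementary (no second-derivative bookkeeping or dominance criterion) and makes transparent why the terms $-\log(x_j-x_i)$ alone would not suffice --- each is constant along the kernel of its affine map, which is precisely the observation behind Remark~\ref{rmk:flat-limit} that convexity is lost at $\varepsilon=0$; the paper's Hessian computation, for its part, is the form that generalises directly from the source it adapts (Theorem~3.3 of Tanaka and Sugihara). Both yield a positive definite Hessian everywhere on the convex set $\mathcal{R}_n$, so the two arguments are interchangeable here.
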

\begin{proof}
The Hessian matrix $\nabla^2 I$ of $I$ is
\begin{equation*}
(\nabla^2 I)_{ij} = \frac{\partial^{2} I}{\partial x_{i} \partial x_{j}}
=
\begin{cases}
Q_\varepsilon''(x_{i}) + \sum_{k \neq i} N''(x_{i} - x_{k})  & (i = j), \\
- N''(x_{i} - x_{j}) & (i\neq j).
\end{cases}
\end{equation*}
Because both
\begin{equation*}
  N(x) = \log \frac{1}{\abs[0]{x}} \quad \text{ and } \quad Q_\varepsilon(x) = \varepsilon^2 x^2
\end{equation*}
are strictly convex on $\R \setminus \{ 0 \}$ and $\R$, respectively, we have $N'' > 0$ and $Q'' > 0$.
Therefore the diagonal elements of $\nabla^2 I$ are always positive.
Moreover,
\begin{equation} \label{eq:diagonal-dominance}
\sum_{k \neq i} \abs[0]{-N''(x_{i} - x_{k})}
=
\sum_{k \neq i} N''(x_{i} - x_{k})
<
 \sum_{k \neq i} N''(x_{i} - x_{k})
+ Q_\varepsilon''(x_{i}),
\end{equation}
which verifies that the Hessian is diagonally dominant and hence positive-definite.
That is, the energy function $I$ is convex on $\mathcal{R}_n$.

To verify that there is a unique minimiser in the non-closed set $\mathcal{R}_n$, consider the function $J(\mathcal{X}_n) = \exp(-I(\mathcal{X}_n))$ which is continuous on the closure of $\mathcal{R}_n$ if we set $J(\mathcal{X}_n) = 0$ for every $\mathcal{X}_n = \{x_1, \ldots, x_n\} \in \Omega^n$ such that $x_i = x_{i+1}$ for some $i$. 
Being positive on $\mathcal{R}_n$, any maximiser of $J$ is in $\mathcal{R}_n$.
As a maximiser of $J$ is a minimser of $I$ and $I$ is convex it follows that $I$ must have a unique minimiser in $\mathcal{R}_n$. \qed
\end{proof}

\begin{remark} \label{rmk:flat-limit} If we set $\varepsilon = 0$, the above optimisation problem becomes that of finding the Fekete points for polynomial interpolation. However, in this case the objective function $I$ is no longer convex because $Q_\varepsilon''(x_i) = 0$ in~\eqref{eq:diagonal-dominance}. Our optimisation problem can be thus viewed as a regularised version of the standard Fekete problem. Based on this and the well-known convergence\footnote{In one dimension the convergence occurs for any points and most commonly used infinitely smooth radial kernels but in higher dimensions the Gaussian kernel is special in that it is the only known kernel for which convergence to a polynomial interpolant, of minimal degree in a certain sense, occurs for every point set.} of kernel interpolants to polynomial interpolants at the so-called flat limit~\citep{Schaback2005,LeeYoonYoon2007,KarvonenSarkka2019b} it may be expected that $\mathcal{X}_n^*$ converge to the polynomial Fekete points as $\varepsilon \to 0$. We do not attempt to prove this.
\end{remark}

\subsection{Error estimates} \label{sec:convergence-gaussian}

In this section we denote $c_\Omega = \sup_{x \in \Omega} \abs[0]{x} < \infty$.

\begin{lemma} \label{lemma:phi-bound-gaussian} Consider the basis functions~\eqref{eq:gauss-expansion} and assume that $n \geq 2\varepsilon^2 c_\Omega^2$. Then
  \begin{equation*}
    \sup_{x \in \Omega} \bigg( \sum_{\ell=n}^\infty \varphi_\ell(x)^2 \bigg)^{1/2} \leq \frac{ \big(\sqrt{2}\,\varepsilon c_\Omega \big)^n}{\sqrt{n!}}.
  \end{equation*}
\end{lemma}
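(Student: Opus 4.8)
The plan is to observe that, after the substitution $t = 2\varepsilon^2 x^2$, the squared basis functions coincide with the point masses of a Poisson distribution, so that the left-hand side is a Poisson tail probability. First I would insert the explicit form of $\varphi_\ell$ from \eqref{eq:gauss-expansion} to get $\varphi_\ell(x)^2 = \e^{-t} t^\ell / \ell!$ with $t = 2\varepsilon^2 x^2$, whence the quantity to be estimated is $g(t) \coloneqq \e^{-t}\sum_{\ell=n}^\infty t^\ell/\ell!$. Since $\abs[0]{x} \le c_\Omega$ on $\Omega$, the argument satisfies $t = 2\varepsilon^2 x^2 \le T$ with $T \coloneqq 2\varepsilon^2 c_\Omega^2$, and the hypothesis $n \ge 2\varepsilon^2 c_\Omega^2$ says exactly that $T \le n$.

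Next I would prove that $g$ is nondecreasing on $[0,\infty)$, which lets me replace the supremum over $\Omega$ by the single value $g(T)$. Differentiating the series term by term and reindexing, the two resulting tails telescope and leave only the boundary term, giving $g'(t) = \e^{-t} t^{n-1}/(n-1)! \ge 0$. Monotonicity together with $t(x) \le T$ for every $x \in \Omega$ then yields $\sup_{x\in\Omega}\sum_{\ell=n}^\infty \varphi_\ell(x)^2 = \sup_{x\in\Omega} g(t(x)) \le g(T)$.

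It remains to show $g(T) \le T^n/n!$, since taking square roots turns $T^n/n! = (2\varepsilon^2 c_\Omega^2)^n/n!$ into the claimed bound $(\sqrt{2}\,\varepsilon c_\Omega)^n/\sqrt{n!}$. Writing $a_\ell = \e^{-T} T^\ell/\ell!$, I would compare the tail with its leading term $a_n = \e^{-T} T^n/n!$ through the ratios $a_{n+j}/a_n = T^j\, n!/(n+j)! = T^j/\prod_{i=1}^j (n+i)$. Because $n + i \ge i$ for each $i \ge 1$, one has $\prod_{i=1}^j (n+i) \ge j!$, so $a_{n+j}/a_n \le T^j/j!$; summing over $j \ge 0$ gives $g(T) = \sum_{\ell=n}^\infty a_\ell \le a_n \e^{T} = T^n/n!$, as required. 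An alternative, which makes explicit use of the hypothesis, is to note that $T \le n$ forces the ratio of consecutive terms of the tail to be at most $T/(n+1) < 1$, so a geometric majorant gives $g(T) \le \tfrac{T^n}{n!}\,\e^{-T}/(1 - T/(n+1))$, and one finishes by verifying the elementary inequality $\e^{-T} \le 1 - T/(n+1)$ on $[0,n]$.

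The crux of the argument is this final tail estimate. The difficulty is that bounding $\sum_{\ell \ge n} t^\ell/\ell!$ by the number of terms times the largest term is useless here, and one instead needs the whole tail controlled by the single term $t^n/n!$. The hypothesis $n \ge 2\varepsilon^2 c_\Omega^2$, i.e.\ $T \le n$, is precisely what forces the terms to decrease from $\ell = n$ onward and to decay geometrically; the factorial comparison $\prod_{i=1}^j (n+i) \ge j!$ is the clean way to encode this and to collapse the tail into the exponential series $\sum_{j} T^j/j! = \e^T$.
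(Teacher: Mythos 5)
Your proof is correct. It arrives at the same final estimate as the paper --- both arguments boil down to $\sum_{\ell\ge n} T^{\ell}/\ell! \le \e^{T}\, T^{n}/n!$ with $T=2\varepsilon^{2}c_\Omega^{2}$, after which one cancels the factor $\e^{-T}$ and takes a square root --- but the two intermediate steps are executed differently. The paper maximises each term $\varphi_\ell(x)^2$ over $\Omega$ separately, which is where the hypothesis $n\ge 2\varepsilon^{2}c_\Omega^{2}$ enters (it guarantees that every $\varphi_\ell^2$ with $\ell\ge n$ is still increasing on $[0,c_\Omega]$, so its supremum over $\Omega$ sits at $\pm c_\Omega$), and then invokes Taylor's theorem with Lagrange remainder to write $\sum_{\ell\ge n}T^{\ell}/\ell! = \e^{\xi}T^{n}/n!$ for some $\xi\in[0,T]$. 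You instead observe that the entire Poisson tail $g(t)=\e^{-t}\sum_{\ell\ge n}t^{\ell}/\ell!$ is monotone in $t$ (via the telescoped derivative $g'(t)=\e^{-t}t^{n-1}/(n-1)!$) and then bound the tail at $t=T$ by the termwise comparison $\prod_{i=1}^{j}(n+i)\ge j!$. Both of your steps are unconditional, so your argument actually proves the stated inequality without the hypothesis $n\ge 2\varepsilon^{2}c_\Omega^{2}$; the hypothesis only makes the bound useful, since $T^{n}/n!$ is small only once $n$ exceeds $T$. This is a mild strengthening of the lemma as stated. The one loose end is in your optional alternative ending: the ``elementary inequality'' $\e^{-T}\le 1-T/(n+1)$ on $[0,n]$ is true but not immediate (it follows from concavity of $T\mapsto 1-T/(n+1)-\e^{-T}$ together with its nonnegativity at $T=0$ and $T=n$); since your primary tail estimate is already complete, you do not need it.
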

\begin{proof}
  By differentation it is easy to see that $\varphi_\ell(x)^2$ attains its maximal value on $\R$ at \sloppy{${x_0 \pm (\ell/(2\varepsilon^2))^{1/2}}$} and that $\varphi_\ell^2$ is decreasing on $[-(\ell/(2\varepsilon^2))^{1/2}, 0]$ and increasing on $[0,(\ell/(2\varepsilon^2))^{1/2}]$.
It follows that
\begin{equation*}
  \sup_{ x \in \Omega} \varphi_\ell(x)^2 = \varphi_\ell(c_\Omega)^2 = \frac{2^\ell \varepsilon^{2\ell}}{\ell!} c_\Omega^{2\ell} \exp(-2\varepsilon^2 c_\Omega^2)
\end{equation*}
for every $\ell \geq n$ if  $n \geq 2\varepsilon^2 c_\Omega^2$. 
  By Taylor's theorem there is $\xi \in [0, 2 \varepsilon^{2} c_\Omega^2]$ such that
  \begin{equation*}
    \begin{split}
    \sup_{x \in \Omega} \, \sum_{\ell = n}^\infty \varphi_\ell(x)^2 &\leq \exp(-2\varepsilon^2 c_\Omega^2) \sum_{\ell = n}^\infty \frac{(2 \varepsilon^{2} c_\Omega^{2})^\ell}{\ell!} \\
    &= \exp(-2\varepsilon^2 c_\Omega^2) \frac{\exp(\xi)}{n!} (2 \varepsilon^{2} c_\Omega^{2})^n \\
    &\leq \exp(-2\varepsilon^2 c_\Omega^2) \frac{\exp(2\varepsilon^2 c_\Omega^2)}{n!} (2 \varepsilon^{2} c_\Omega^{2})^n \\
    &= \frac{(2 \varepsilon^{2} c_\Omega^{2})^n}{n!}.
    \end{split}
  \end{equation*}
  This proves the claim. \qed
\end{proof}

Using the estimate of Lemma~\ref{lemma:phi-bound-gaussian} in Corollary~\ref{corollary:error-fekete} yields an explicit error estimate for interpolation with the Gaussian kernel.

\begin{theorem} \label{thm:error-gaussian} Consider the Gaussian kernel with the orthonormal expansion~\eqref{eq:gauss-expansion} and suppose that $\Omega \subset \R$ is a closed interval. If $f \in \mathcal{H}_K(\Omega)$ is interpolated at the unique approximate Fekete points $\mathcal{X}_n^*$ defined in~\eqref{eq:approximate-fekete-gaussian} and $n \geq 2\varepsilon^2 c_\Omega^2$, then
  \begin{equation} \label{eq:error-bound-gaussian}
    \sup_{x \in \Omega} \, \abs[0]{ f(x) - s_f(x)} \leq C_1 \norm[0]{f}_{\mathcal{H}_K(\Omega)} n^{3/4} \exp\bigg( \! -n \bigg( \frac{1}{2} \log n - \log C_2 \bigg)\bigg),
  \end{equation}
  where $C_1 = (128/\pi)^{1/4} \approx 2.53$ and $C_2 = \sqrt{2\e}\, \varepsilon c_\Omega$.
\end{theorem}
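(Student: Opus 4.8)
The plan is to combine the generic approximate-Fekete error estimate of Corollary~\ref{corollary:error-fekete} with the explicit tail bound of Lemma~\ref{lemma:phi-bound-gaussian}, and then reduce the result to the stated form by a single application of Stirling's approximation. Since in this section the orthonormal expansion is indexed from zero, so that the truncated kernel uses $\varphi_0, \ldots, \varphi_{n-1}$, the tail appearing in Corollary~\ref{corollary:error-fekete} is $\sum_{\ell=n}^\infty \varphi_\ell(x)^2$ rather than $\sum_{\ell=n+1}^\infty \varphi_\ell(x)^2$, which is precisely the quantity bounded in Lemma~\ref{lemma:phi-bound-gaussian}. First I would insert that lemma---applicable exactly because $n \geq 2\varepsilon^2 c_\Omega^2$---into Corollary~\ref{corollary:error-fekete}, obtaining
\[
  \sup_{x\in\Omega}\abs[0]{f(x)-s_f(x)} \leq 2\norm[0]{f}_{\mathcal{H}_K(\Omega)}\,(1+n)\,\frac{(\sqrt{2}\,\varepsilon c_\Omega)^n}{\sqrt{n!}} .
\]
Everything that remains is to massage this right-hand side into~\eqref{eq:error-bound-gaussian}.

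Next I would apply the Robbins lower bound $n! \geq \sqrt{2\pi n}\,(n/\e)^n$, which gives $1/\sqrt{n!} \leq \e^{n/2}/\big((2\pi n)^{1/4}\, n^{n/2}\big)$. Substituting this and grouping the factors involving $\varepsilon c_\Omega$ and $\e$, the combination $(\sqrt{2}\,\varepsilon c_\Omega)^n \e^{n/2}$ collapses exactly to $C_2^n$ with $C_2 = \sqrt{2\e}\,\varepsilon c_\Omega$. Writing $C_2^n n^{-n/2} = \exp\big(n\log C_2 - \tfrac{1}{2} n\log n\big)$ reproduces the exponential factor in~\eqref{eq:error-bound-gaussian}, leaving only the polynomial prefactor $2(1+n)/(2\pi n)^{1/4}$ to be controlled.

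The final step is to bound this prefactor by $C_1 n^{3/4}$. This reduces to the inequality $2(1+n) \leq C_1 (2\pi)^{1/4} n$, and the choice $C_1 = (128/\pi)^{1/4}$ is dictated by the identity $C_1 (2\pi)^{1/4} = (128/\pi \cdot 2\pi)^{1/4} = 256^{1/4} = 4$, after which the requirement becomes $2(1+n) \leq 4n$, i.e.\ $n \geq 1$, which holds trivially. I do not expect any genuine obstacle: the whole argument is a substitution followed by Stirling's estimate, and the only point demanding care is tracking the constants so that they assemble into the clean values $C_1$ and $C_2$---in particular verifying the identity $C_1(2\pi)^{1/4}=4$ that makes the prefactor bound valid for every $n \geq 1$.
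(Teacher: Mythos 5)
Your proposal is correct and follows essentially the same route as the paper: insert Lemma~\ref{lemma:phi-bound-gaussian} into Corollary~\ref{corollary:error-fekete}, apply the Robbins lower bound $n! \geq \sqrt{2\pi}\, n^{n+1/2}\e^{-n}$, and check the constant identity $C_1(2\pi)^{1/4}=4$ together with $1+n\leq 2n$. The only (harmless) difference is the order of the last two steps, and your remark about the zero-based indexing reconciling the tail $\sum_{\ell=n}^\infty$ with the corollary's $\sum_{\ell=n+1}^\infty$ is a correct detail the paper leaves implicit.
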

\begin{proof} The claim follows from Corollary~\ref{corollary:error-fekete}, Lemma~\ref{lemma:phi-bound-gaussian}, and the lower bound \sloppy{${n! \geq \sqrt{2\pi} n^{n+1/2} \e^{-n}}$} in Stirling's approximation~\citep{Robbins1955}:
  \begin{equation*}
    \begin{split}
    \sup_{x \in \Omega} \, \abs[0]{ f(x) - s_f(x)} &\leq 2 \norm[0]{f}_{\mathcal{H}_K(\Omega)} (1 + n) \frac{ \big(\sqrt{2}\,\varepsilon c_\Omega \big)^n}{\sqrt{n!}} \\
    &\leq 4 \norm[0]{f}_{\mathcal{H}_K(\Omega)} \frac{ \big(\sqrt{2}\,\varepsilon c_\Omega \big)^n n }{\sqrt{n!}} \\
    &\leq \bigg( \frac{128}{\pi} \bigg)^{1/4} \norm[0]{f}_{\mathcal{H}_K(\Omega)} \frac{\big(\sqrt{2}\,\varepsilon c_\Omega \big)^n \e^{n/2} n }{n^{n/2+1/4}} \\
    &= \bigg( \frac{128}{\pi} \bigg)^{1/4} n^{3/4} \norm[0]{f}_{\mathcal{H}_K(\Omega)} \bigg( \frac{\sqrt{2 \e} \, \varepsilon c_\Omega }{n^{1/2}} \bigg)^n \\
    &= C_1 \norm[0]{f}_{\mathcal{H}_K(\Omega)} n^{3/4} \exp\bigg( \! -n \bigg( \frac{1}{2} \log n - \log C_2 \bigg)\bigg).
    \end{split}
  \end{equation*} \qed
\end{proof}

Also Theorem~\ref{thm:superconvergence} can be specialised, and in some cases the kernel of the subspace $\mathcal{H}_K^\alpha(\Omega)$ has an explicit form. 
For instance, set $\alpha_\ell = \sqrt{\ell! \, 2^\ell \varepsilon^{2\ell}}$. Then
\begin{equation*}
  \begin{split}
  K^\alpha(x,y) &= \exp\big(\!-\varepsilon^2(x^2+y^2)\big) \sum_{\ell=0}^\infty \frac{1}{\alpha_\ell^2} \frac{2^\ell \varepsilon^{2\ell}}{\ell!} (xy)^\ell \\
  &= \exp\big(\!-\varepsilon^2(x^2+y^2)\big) \sum_{\ell=0}^\infty \frac{1}{(\ell!)^2} (x y)^\ell,
  \end{split}
\end{equation*}
which can be written in terms of $I_0$, the modified Bessel function of the first kind:
\begin{equation*}
  K^\alpha(x,y) = \exp\big( \!-\varepsilon^2(x^2+y^2)\big) I_0\big(2\sqrt{xy}\big).
\end{equation*}

\begin{theorem} \label{thm:superconvergence-gaussian}
  Consider the Gaussian kernel with the orthonormal expansion~\eqref{eq:gauss-expansion} and suppose that $\Omega \subset \R$ is a closed interval. 
  If $f \in \mathcal{H}_K^\alpha(\Omega)$ is interpolated at the unique approximate Fekete points $\mathcal{X}_n^*$ defined in~\eqref{eq:approximate-fekete-gaussian} and $n \geq 2\varepsilon^2 c_\Omega^2$, then
  \begin{equation*}
    \sup_{x \in \Omega} \, \abs[0]{ f(x) - s_f(x)} \leq C_1 \norm[0]{f}_{\mathcal{H}_K^\alpha(\Omega)} n^{3/4} \alpha_n^{-1} \exp\bigg( \! -n \bigg( \frac{1}{2} \log n - \log C_2 \bigg)\bigg),
  \end{equation*}
  where $C_1 = (128/\pi)^{1/4} \approx 2.53$ and $C_2 = \sqrt{2\e}\, \varepsilon c_\Omega$.
\end{theorem}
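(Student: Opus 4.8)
The plan is to follow the proof of Theorem~\ref{thm:error-gaussian} almost verbatim, substituting the subspace estimate of Theorem~\ref{thm:superconvergence} for the ordinary estimate of Corollary~\ref{corollary:error-fekete}. First I would invoke Theorem~\ref{thm:superconvergence}, taking care of the zero-based indexing adopted throughout this section: with $n$ interpolation points the truncated kernel employs $\varphi_0, \ldots, \varphi_{n-1}$, so the tail sum appearing in the general statement begins at $\ell = n$ and the decay factor is $\alpha_n^{-1}$ rather than $\alpha_{n+1}^{-1}$. This gives
\begin{equation*}
  \sup_{x \in \Omega} \, \abs[0]{f(x) - s_f(x)} \leq 2 \norm[0]{f}_{\mathcal{H}_K^\alpha(\Omega)} (1+n) \, \alpha_n^{-1} \sup_{x \in \Omega} \bigg( \sum_{\ell=n}^\infty \varphi_\ell(x)^2 \bigg)^{1/2}.
\end{equation*}

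Next I would bound the supremum of the tail sum using Lemma~\ref{lemma:phi-bound-gaussian}, which applies precisely because of the standing hypothesis $n \geq 2\varepsilon^2 c_\Omega^2$. This replaces the supremum by $(\sqrt{2}\,\varepsilon c_\Omega)^n / \sqrt{n!}$ and introduces the factorial $n!$ whose growth ultimately produces the super-exponential decay.

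From this point the computation is identical to the chain of inequalities in the proof of Theorem~\ref{thm:error-gaussian}: I would use $1 + n \leq 2n$, apply the Stirling lower bound $n! \geq \sqrt{2\pi}\, n^{n+1/2} \e^{-n}$, and collect the constants into $C_1 = (128/\pi)^{1/4}$ and $C_2 = \sqrt{2\e}\,\varepsilon c_\Omega$. The only differences from that proof are that the norm $\norm[0]{f}_{\mathcal{H}_K(\Omega)}$ is everywhere replaced by the subspace norm $\norm[0]{f}_{\mathcal{H}_K^\alpha(\Omega)}$ and that the factor $\alpha_n^{-1}$ is carried unchanged through each line. Since every step is elementary given the two cited results, there is no genuine obstacle here; the only point demanding attention is the bookkeeping of the index shift, which is what correctly yields $\alpha_n^{-1}$ in the final bound rather than $\alpha_{n+1}^{-1}$.
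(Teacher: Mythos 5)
Your proposal is correct and matches exactly what the paper intends: the paper omits an explicit proof, remarking only that Theorem~\ref{thm:superconvergence} ``can be specialised,'' and the intended specialisation is precisely your combination of that theorem with Lemma~\ref{lemma:phi-bound-gaussian} and the Stirling computation from Theorem~\ref{thm:error-gaussian}. Your handling of the zero-based index shift, which turns $\alpha_{n+1}^{-1}$ into $\alpha_n^{-1}$ and starts the tail sum at $\ell=n$, is the one point that requires care and you get it right.
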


If $\Omega = [a,b] \subset \R$ is a closed interval, the standard fill-distance based bound~\citep[Theorem 6.1]{RiegerZwicknagl2010} for interpolation error is
\begin{equation} \label{eq:fill-distance-bound}
  \sup_{ x \in \Omega} \, \abs[0]{ f(x) - s_f(x) } \leq 2 \norm[0]{f}_{\mathcal{H}_K(\Omega)} \exp\big( C \log (h_{\mathcal{X}_n,\Omega}) / h_{\mathcal{X}_n,\Omega} \big)
\end{equation}
whenever the \emph{fill-distance}
\begin{equation*}
  h_{\mathcal{X}_n,\Omega} = \sup_{ x \in \Omega} \, \min_{x_k \in \mathcal{X}_n} \abs[0]{x - x_k}
\end{equation*}
is sufficiently small. The constant in~\eqref{eq:fill-distance-bound} satisfies $C \leq \frac{1}{8}\min\{(b-a)/6, 1\}$.\footnote{This is the constant $C$ in Theorem~6.1 of \citet{RiegerZwicknagl2010}. To derive the claimed bound, observe that this constant is given as $C = \epsilon B/4$ for $B \leq \min\{(b-a)/6, 1\}$ in their proof of Theorem~4.5. On p.\@~120 they show that $\epsilon = 1/2$ if the kernel is Gaussian.} 
For the equispaced points
\begin{equation*}
  \mathcal{X}_n = \bigg\{ a, a + \frac{b-a}{n}, \ldots, b - \frac{b-a}{n}, b \bigg\},
\end{equation*}
which have the minimal fill-distance $h_{\mathcal{X}_n,\Omega} = (b-a)/n$, the bound~\eqref{eq:fill-distance-bound} becomes
\begin{equation*}
\sup_{ x \in \Omega} \, \abs[0]{ f(x) - s_f(x) } \leq 2 \norm[0]{f}_{\mathcal{H}_K(\Omega)} \exp\bigg( - \frac{C}{b-a} \, n \bigg( \log n - \log(b-a) \bigg) \bigg),
\end{equation*}
where $C/(b-a) \leq \frac{1}{48}$.
Our bound~\eqref{eq:error-bound-gaussian} for points $\mathcal{X}_n^*$, being essentially of order $\exp(-\frac{1}{2} n \log n)$, is thus better when $n$ is sufficiently large. However, a significant advantage of bounds of the type~\eqref{eq:fill-distance-bound} is that they apply to nested point sets (i.e., $\mathcal{X}_n \subset \mathcal{X}_{n+1}$ for every $n \geq 1$). It cannot be expected that the approximate Fekete point sets are nested. Further error estimates for Chebyshev-type nodes that cluster near the boundary are provided in~\citep{RiegerZwicknagl2014}.

\begin{remark} It is easy to see that in the Gaussian case the Lagrange basis functions in~\eqref{eq:lagrange-system} can be expressed in terms of the classical polynomial Lagrange functions:
\begin{equation} \label{eq:lagrange-functions}
  u_k^\varphi(x) = \exp( \varepsilon^2 x_k^2) \exp(-\varepsilon^2 x^2) l_k(x),
\end{equation}
where
\begin{equation*}
  l_k(x) = \prod_{\ell \neq k} \frac{x - x_\ell}{x_k - x_\ell}.
\end{equation*}
Let $\Lambda_\text{pol}(\mathcal{X}_n) = \sup_{x \in \Omega} \sum_{k=1}^n \abs[0]{l_k(x)}$ be the Lebesgue constant for polynomial interpolation. It follows easily from~\eqref{eq:lagrange-functions} and the boundedness of $\Omega$ that there exist $C_1, C_2 > 0$ such that
  \begin{equation*}
    C_1 \Lambda_\text{pol}(\mathcal{X}_n) \leq \Lambda_\varphi(\mathcal{X}_n) \leq C_2 \Lambda_\text{pol}(\mathcal{X}_n)
  \end{equation*}
for any $\mathcal{X}_n \subset \Omega$.
This implies that in Theorem~\ref{thm:uniform-error} the coefficient $1+\Lambda_\varphi(\mathcal{X}_n)$ can be replaced with $1+C_2 \Lambda_\text{pol}(\mathcal{X}_n)$, which means that convergence results are available if polynomial Lebesgue constants can be controlled (e.g., if $\mathcal{X}_n$ are the Chebyshev points).
\end{remark}

\subsection{Tensor product algorithms} \label{sec:tensor}

In this section we provide error estimates for interpolation with anisotropic Gaussian kernels in higher dimensions when the evaluation points are constructed as tensor products of the approximate Fekete points~\eqref{eq:approximate-fekete-gaussian}. Besides~\citep{Beatson2010} there does not appear to be much work on error estimates for general anisotropic kernels. \citet{FasshauerHickernell2012} and \citet{SloanWozniakowski2018} analyse the $L^2$-error of general linear algorithms for functions in the RKHS of an anisotropic Gaussian.

Let
\begin{equation} \label{eq:rectangle}
  \Omega = \Omega_1 \times \cdots \times \Omega_d \subset \R^d \quad \text{ for } \quad \Omega_i = [a_i, b_i] \neq \emptyset
\end{equation}
be a hyper-rectangle and consider the $d$-dimensional anisotropic Gaussian kernel~\eqref{eq:gauss-multi},
\begin{equation*}
  K(\b{x}, \b{y}) = \exp \bigg( \! - \sum_{i=1}^d \varepsilon_i^2 (x_i - y_i)^2 \bigg) \eqqcolon \prod_{i=1}^d K_i(x_i, y_i),
\end{equation*}
on $\Omega$.
Let $n_1, \ldots, n_d \in \N$ and denote $N = n_1 \times \cdots \times n_d$. We take the point set to be a tensor product of approximate Fekete point sets~\eqref{eq:approximate-fekete-gaussian} for Gaussian kernels $K_i$ on $\Omega_i$:
\begin{equation} \label{eq:tensor-fekete}
  \mathcal{X}^*_{N} = \mathcal{X}_{1,n_1}^* \times \cdots \times \mathcal{X}_{d,n_d}^* \subset \Omega,
\end{equation}
where $\mathcal{X}_{i,n_i}^* \subset \Omega_i$ stands for the set of $n_i$ approximate Fekete points for kernel $K_i$ on $\Omega_i$. Due to the tensor product structure of the point set and the RKHS~\citep[Section~4.6 in Chapter~1]{BerlinetThomasAgnan2004}, any function $f \in \mathcal{H}_K(\Omega)$ of the form
\begin{equation*}
  f(\b{x}) = f_1(x_1) \times \cdots \times f_d(x_d) \quad \text{ for } \quad  f_1 \in \mathcal{H}_{K_1}(\Omega_1), \ldots, f_d \in \mathcal{H}_{K_d}(\Omega_d)
\end{equation*}
has the norm
\begin{equation*}
  \norm[0]{f}_{\mathcal{H}_K(\Omega)} = \norm[0]{f_1}_{\mathcal{H}_{K_1}(\Omega_1)} \times \cdots \times \norm[0]{f_d}_{\mathcal{H}_{K_d}(\Omega_d)}
\end{equation*}
and the kernel interpolant $s_f$ to any $f \in \mathcal{H}_K(\Omega)$ can be written as
\begin{equation*}
  s_f(\b{x}) = s_{1,f_1}(x_1) \times \cdots \times s_{d,f_d}(x_d),
\end{equation*}
where $s_{i,f_i}$ is the kernel interpolant, based on $K_i$, of $f_i \in \mathcal{H}_{K_i}(\Omega_i)$ at the points $\mathcal{X}_{i,n_i}^*$.

\begin{theorem} \label{thm:error-multi} Consider the multi-dimensional Gaussian kernel~\eqref{eq:gauss-multi} and suppose that $\Omega \subset \R^d$ is a hyper-rectangle of the form~\eqref{eq:rectangle}. If $f \in \mathcal{H}_K(\Omega)$ is interpolated at the tensor product points $\mathcal{X}_N^*$ defined in~\eqref{eq:tensor-fekete} and $n_i \geq 2\varepsilon_i^2 c_{\Omega_i}^2$ for every $i=1,\ldots,d$, then
  \begin{equation} \label{eq:error-bound-multi}
    \sup_{ \b{x} \in \Omega} \, \abs[0]{ f(\b{x}) - s_f(\b{x}) } \leq C_1 \norm[0]{f}_{\mathcal{H}_K(\Omega)} \sum_{i=1}^d n_i^{3/4} \exp\bigg( \! -n_i \bigg( \frac{1}{2} \log n_i - \log C_{i,2} \bigg)\bigg),
  \end{equation}
  where $C_1 = (128/\pi)^{1/4} \approx 2.53$ and $C_{i,2} = \sqrt{2 \e} \, \varepsilon_i c_{\Omega_i}$.
\end{theorem}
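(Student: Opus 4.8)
The plan is to argue by induction on the dimension $d$, peeling off one coordinate at a time and reducing each step to the one-dimensional estimate of Theorem~\ref{thm:error-gaussian}. The base case $d = 1$ is precisely Theorem~\ref{thm:error-gaussian}. For the inductive step I would write $\Omega = \Omega' \times \Omega_d$ with $\Omega' = \Omega_1 \times \cdots \times \Omega_{d-1}$, factor the kernel as $K = K' K_d$ with $K' = \prod_{i<d} K_i$, and use the tensor product identification $\mathcal{H}_K(\Omega) = \mathcal{H}_{K'}(\Omega') \otimes \mathcal{H}_{K_d}(\Omega_d)$. By the factorisation of the interpolant recalled before the theorem, the tensor product interpolation operator splits as $S = S' \otimes S_d$, where $S'$ is the interpolation operator for $K'$ at the tensor product points $\mathcal{X}_{1,n_1}^* \times \cdots \times \mathcal{X}_{d-1,n_{d-1}}^*$ and $S_d$ is the one-dimensional operator for $K_d$. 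Both $S'$ and $S_d$ are contractions on their respective RKHSs by the minimum-norm property~\eqref{eq:minimum-norm}.

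The key algebraic tool is the telescoping identity
\begin{equation*}
  I - S' \otimes S_d = (I - S') \otimes I + S' \otimes (I - S_d),
\end{equation*}
which I would apply to $f$ and bound the two resulting terms separately in the supremum norm over $\Omega$. For the first term I would fix $x_d \in \Omega_d$ and consider the slice $f(\cdot, x_d)$, which lies in $\mathcal{H}_{K'}(\Omega')$ and satisfies $\norm[0]{f(\cdot, x_d)}_{\mathcal{H}_{K'}(\Omega')} \leq \sqrt{K_d(x_d,x_d)} \, \norm[0]{f}_{\mathcal{H}_K(\Omega)} = \norm[0]{f}_{\mathcal{H}_K(\Omega)}$, where the simplification $K_d(x_d,x_d) = 1$ is a property of the Gaussian kernel. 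Since $[(I-S') \otimes I]f$ evaluated at $(\b{x}', x_d)$ equals $[(I - S')f(\cdot,x_d)](\b{x}')$, the induction hypothesis applied to $f(\cdot,x_d)$ bounds this term, uniformly in $x_d$, by $C_1 \norm[0]{f}_{\mathcal{H}_K(\Omega)} \sum_{i=1}^{d-1} n_i^{3/4} \exp(-n_i(\tfrac{1}{2} \log n_i - \log C_{i,2}))$, giving the first $d-1$ summands.

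For the second term I would write $S' \otimes (I - S_d) = [I \otimes (I - S_d)][S' \otimes I]$ and set $h = [S' \otimes I]f$, which satisfies $\norm[0]{h}_{\mathcal{H}_K(\Omega)} \leq \norm[0]{f}_{\mathcal{H}_K(\Omega)}$ because $S'$ is a contraction. Evaluating at $(\b{x}', x_d)$ gives $[(I - S_d) h(\b{x}', \cdot)](x_d)$, where the slice $h(\b{x}',\cdot)$ lies in $\mathcal{H}_{K_d}(\Omega_d)$ with $\norm[0]{h(\b{x}',\cdot)}_{\mathcal{H}_{K_d}(\Omega_d)} \leq \sqrt{K'(\b{x}',\b{x}')} \, \norm[0]{h}_{\mathcal{H}_K(\Omega)} \leq \norm[0]{f}_{\mathcal{H}_K(\Omega)}$, again using $K'(\b{x}',\b{x}') = 1$. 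The one-dimensional estimate of Theorem~\ref{thm:error-gaussian} applied in the $x_d$ variable then bounds this term by $C_1 \norm[0]{f}_{\mathcal{H}_K(\Omega)} n_d^{3/4} \exp(-n_d(\tfrac{1}{2} \log n_d - \log C_{d,2}))$, the missing $i = d$ summand. Adding the two bounds together with the triangle inequality completes the induction.

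The step I expect to be the main obstacle is justifying the partial-evaluation bounds $\norm[0]{f(\cdot,x_d)}_{\mathcal{H}_{K'}(\Omega')} \leq \sqrt{K_d(x_d,x_d)}\,\norm[0]{f}_{\mathcal{H}_K(\Omega)}$ and $\norm[0]{h(\b{x}',\cdot)}_{\mathcal{H}_{K_d}(\Omega_d)} \leq \sqrt{K'(\b{x}',\b{x}')}\,\norm[0]{h}_{\mathcal{H}_K(\Omega)}$ rigorously: one must check that a slice of a tensor product RKHS element again belongs to the component RKHS and that the partial-evaluation map is bounded with the stated norm. This is immediate for elementary tensors $g' \otimes g_d$ and extends to general elements by density and the boundedness of the evaluation functionals; it is precisely here that the special value $K_i(x_i,x_i) = 1$ of the Gaussian kernel removes any dependence on the location of the frozen coordinates and keeps the constant equal to $\norm[0]{f}_{\mathcal{H}_K(\Omega)}$. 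A convenient alternative that avoids the induction is to verify the full telescoping identity $I - \bigotimes_{i} S_i = \sum_{i} (\bigotimes_{j<i} S_j) \otimes (I - S_i) \otimes (\bigotimes_{j>i} I)$ directly and bound each summand by the same slicing argument; the inductive formulation is merely more economical.
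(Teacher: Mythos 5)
Your argument is correct, but it takes a genuinely different route from the paper's. The paper never slices a general element of $\mathcal{H}_K(\Omega)$: it first proves the telescoped bound only for \emph{product} functions $g = g_1 \otimes \cdots \otimes g_d$ (using the full $d$-term telescoping identity you mention as an alternative, with the elementary pointwise bounds $\abs[0]{g_i(x_i)} \leq \norm[0]{g_i}_{\mathcal{H}_{K_i}(\Omega_i)}$ and $\abs[0]{s_{i,g_i}(x_i)} \leq \norm[0]{g_i}_{\mathcal{H}_{K_i}(\Omega_i)}$), then specialises to the single product function $g = K(\cdot,\b{x})$, for which $g(\b{x}) - s_g(\b{x}) = P_{\mathcal{X}_N^*}(\b{x})^2$ and $\norm[0]{g_i - s_{i,g_i}}_{\mathcal{H}_{K_i}(\Omega_i)} = P_{\mathcal{X}_{i,n_i}^*}(x_i)$; this yields $P_{\mathcal{X}_N^*}(\b{x}) \leq C_1 \sum_i n_i^{3/4}\exp(-n_i(\tfrac12\log n_i - \log C_{i,2}))$, and the general case follows at once from the worst-case characterisation $\abs[0]{f(\b{x}) - s_f(\b{x})} \leq \norm[0]{f}_{\mathcal{H}_K(\Omega)} P_{\mathcal{X}_N^*}(\b{x})$ in~\eqref{eq:P-error-bound}. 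Your route instead applies the operator identity $I - S'\otimes S_d = (I-S')\otimes I + S'\otimes(I-S_d)$ directly to an arbitrary $f$ and pays for it with the partial-evaluation lemma (a slice of a tensor-product RKHS element lies in the factor RKHS with norm at most $\sqrt{K_d(x_d,x_d)}$ times the full norm, here $=1$ for the Gaussian) and the fact that $S'\otimes I$ is a contraction; both are correct --- the partial-evaluation map is $I \otimes \delta_{x_d}$ and the interpolation operators are orthogonal projections --- and your sketch of the verification (elementary tensors plus density and boundedness of evaluation functionals, relying on the identification $\mathcal{H}_K(\Omega) = \mathcal{H}_{K'}(\Omega')\otimes\mathcal{H}_{K_d}(\Omega_d)$ already invoked in the paper) is the right one. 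What each approach buys: yours is more self-contained at the level of the error bound and generalises immediately to non-product kernels on product domains whenever a tensor-product interpolation operator is used, while the paper's power-function detour sidesteps all tensor-product operator theory, needing only pointwise identities from Section~2, and incidentally produces the slightly stronger intermediate statement that the \emph{squared} power function is bounded by the sum of the squares of the one-dimensional bounds. Both arrive at the same constants.
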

\begin{proof}
  Let $g(\b{x}) = g_1(x_1) \times \cdots \times g_d(x_d)$ for $g_i \in \mathcal{H}_{K_i}(\Omega_i)$ and denote $g_{i:j}(\b{x}) = g_i(x_i) \times \cdots \times g_j(x_j)$ for $1 \leq i \leq j \leq d$. Then
  \begin{equation*} 
    \begin{split}
    g(\b{x}) - s_g(\b{x}) ={}& g_{2:d}(\b{x}) g_1 (x_1) - s_{g_{2:d}}(\b{x}) s_{g_1}(x_1) \\
    ={}& g_{2:d}(\b{x}) [ g_1(x_1) - s_{1,g_1}(x_1)] + [g_{2:d}(\b{x}) - s_{g_{2:d}}(\b{x})] s_{1,g_1}(x_1) \\
    ={}& \sum_{i=1}^{d} [g_i(x_i) - s_{i,g_i}(x_i)] g_{i+1:d}(\b{x}) \prod_{j=1}^{i-1} s_{j,g_j}(x_j),
    \end{split}
  \end{equation*}
  where the notational convention $g_{d+1:d}(\b{x}) = 1$ is used.
  By the reproducing property and the minimum-norm property~\eqref{eq:minimum-norm},
  \begin{equation*}
    \abs[0]{g_i(x_i)} = \abs[1]{ \inprod{g}{K_i(\cdot, x_i)}_{\mathcal{H}_{K_i}(\Omega_i)} } \leq \norm[0]{g_i}_{ \mathcal{H}_{K_i}(\Omega_i)} \: \text{ and } \: \abs[0]{s_{i,g_i}(x_i)} \leq \norm[0]{g_i}_{\mathcal{H}_{K_i}(\Omega_i)}
  \end{equation*}
  for any $i \leq d$ and $x_i \in \Omega_i$. 
  Because $s_{f-s_f} \equiv 0$, the norm $\norm[0]{f}_{\mathcal{H}_K(\Omega)}$ on the right-hand side of the bound~\eqref{eq:error-bound-gaussian} can be replaced with $\norm[0]{f-s_f}_{\mathcal{H}_K(\Omega)}$ by considering interpolation of the function $f-s_f$.
  From this and the above estimates we get
\begin{equation*}
  \begin{split}
    \abs[0]{g(\b{x}) - s_g(\b{x})} &\leq \sum_{i=1}^d \abs[0]{g_i(x_i) - s_{i,g_i}(x_i)} \prod_{j \neq i} \norm[0]{g_j}_{\mathcal{H}_{K_j}(\Omega_j)} \\
    &\leq C_1 \sum_{i=1}^d n_i^{3/4} \exp\bigg( \! -n_i \bigg( \frac{1}{2} \log n_i - \log C_{i,2} \bigg)\bigg) \\
    &\hspace{2cm}\times \norm[0]{g_i - s_{i,g_i}}_{\mathcal{H}_{K_i}(\Omega_i)} \prod_{j \neq i} \norm[0]{g_j}_{\mathcal{H}_{K_j}(\Omega_j)}.
  \end{split}
\end{equation*}
To obtain a bound that is valid for any function in $\mathcal{H}_K(\Omega)$ we exploit~\eqref{eq:P-error-bound}.
For any $\b{x} \in \Omega$ set $g = K(\cdot, \b{x})$.
Because $\norm[0]{K_i(\cdot,x_i)}_{\mathcal{H}_{K_i}(\Omega_i)} = 1$ and, by the power function characterisations~\eqref{eq:P-error-bound} and~\eqref{eq:power-function} and the estimate~\eqref{eq:error-bound-gaussian},
\begin{equation*}
  P_{\mathcal{X}_N^*}(\b{x})^2 = g(\b{x}) - s_g(\b{x})
\end{equation*}
and
\begin{equation*}
  \norm[0]{g_i - s_{i,g_i}}_{\mathcal{H}_{K_i}(\Omega_i)} = P_{\mathcal{X}_{i,n_i}^*}(x_i) \leq C_1 n_i^{3/4} \exp\bigg( \! -n_i \bigg( \frac{1}{2} \log n_i - \log C_{i,2} \bigg)\bigg),
\end{equation*}
we have
\begin{equation*}
  \begin{split}
    P_{\mathcal{X}_N^*}(\b{x})^2 &\leq C_1^2 \sum_{i=1}^d \Bigg[ n_i^{3/4} \exp\bigg( \! - n_i \bigg( \frac{1}{2} \log n_i - \log C_{i,2} \bigg)\bigg) \Bigg]^2 \\
    &\leq \Bigg[ C_1 \sum_{i=1}^d n_i^{3/4} \exp\bigg( \! - n_i \bigg( \frac{1}{2} \log n_i - \log C_{i,2} \bigg)\bigg) \Bigg]^2.
    \end{split}
\end{equation*}
The claim now follows from~\eqref{eq:P-error-bound}. \qed
\end{proof}

In particular, if $n_1 = \cdots = n_d = n$ (so that $N = n^d$) and all $\Omega_i$ and $\varepsilon_i$ are equal, the bound of Theorem~\ref{thm:error-multi} becomes
\begin{equation*}
  \sup_{ \b{x} \in \Omega} \, \abs[0]{ f(\b{x}) - s_f(\b{x}) } \leq C_1 \norm[0]{f}_{\mathcal{H}_K(\Omega)} d \, N^{3/(4d)} \exp\bigg( \! -N^{1/d} \bigg( \frac{1}{2d} \log N - \log C_{2} \bigg)\bigg).
\end{equation*}
It would be straightforward to generalise Theorem~\ref{thm:superconvergence-gaussian} to the tensor product setting.

\section{Numerical examples} \label{sec:examples}

This section contains two numerical examples where maxima of power functions as well as interpolation errors for specific RKHS functions are compared when the interpolation points are either the approximate Fekete, $P$-greedy points, or the Chebyshev points and the kernel is Gaussian.

\begin{figure}[th!]
  \centering
  \includegraphics{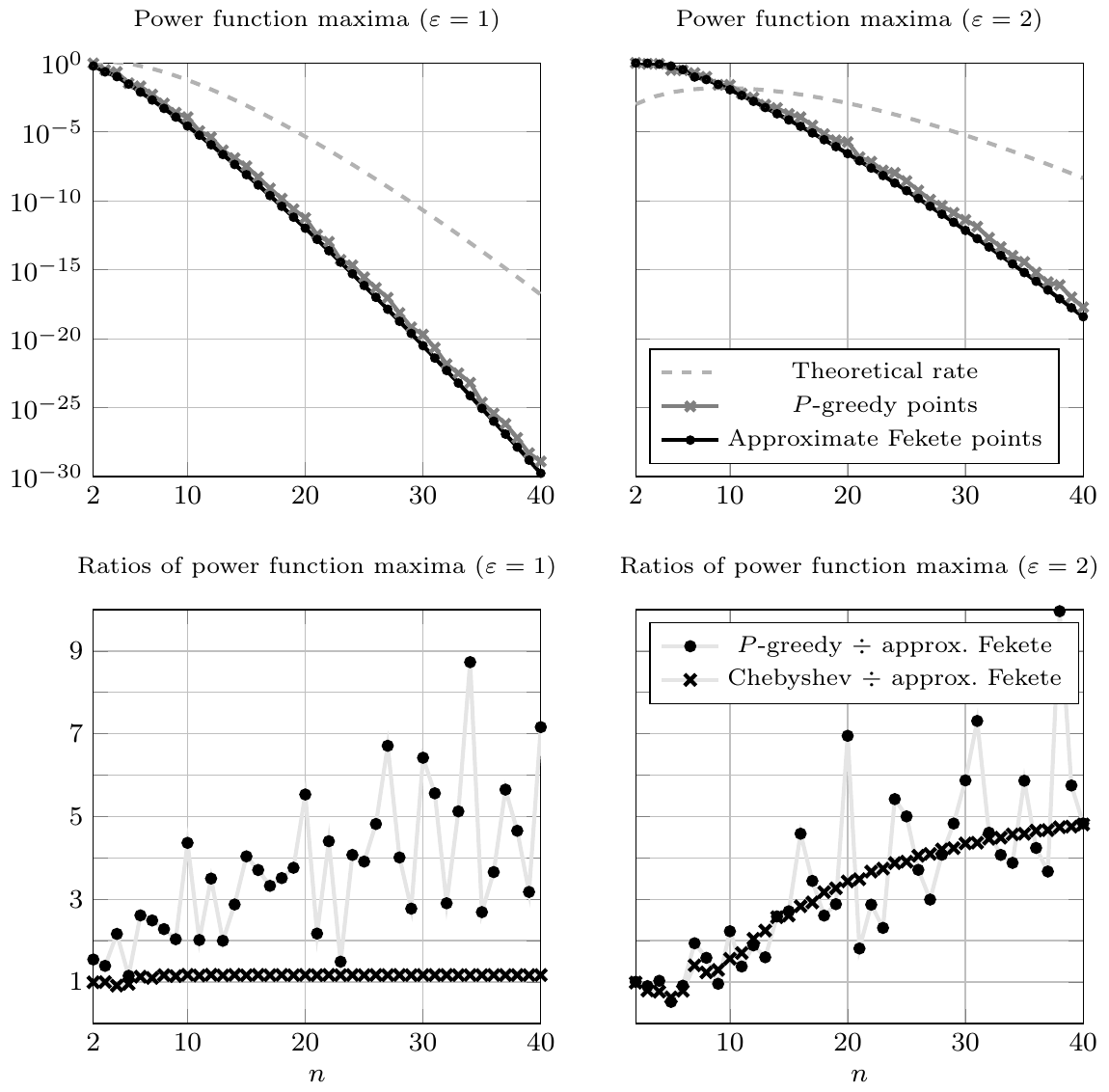}
  \caption{\emph{Top:} Power function maxima $\max_{ x \in [-1, 1] }P_{\mathcal{X}_n}(x)$ approximated using a discretisation of $[-1,1]$ into 1,000 equispaced points for the approximate Fekete and $P$-greedy points of the Gaussian kernel~\eqref{eq:gauss-kernel} with $\varepsilon=1$ (\emph{left}) and $\varepsilon=2$ (\emph{right)}. Also displayed are the (scaled) theoretical rates from Theorem~\ref{thm:error-gaussian}. \emph{Bottom:} Ratios of power function maxima for (1) $P$-greedy and approximate Fekete points and (2) Chebyshev and approximate Fekete points. These panels demonstrate that the power function for the $P$-greedy points can attain a value almost ten times that for the approximate Fekete points ($n=34$ for $\varepsilon=1$ and $n=38$ for $\varepsilon=2$) and that the approximate Fekete points are typically only marginally better than the Chebyshev points when $\varepsilon=1$ but can consistently outperform them when $\varepsilon$ is increased.} \label{fig:power-functions}
\end{figure}

\begin{figure}[th!]
  \centering
  \includegraphics{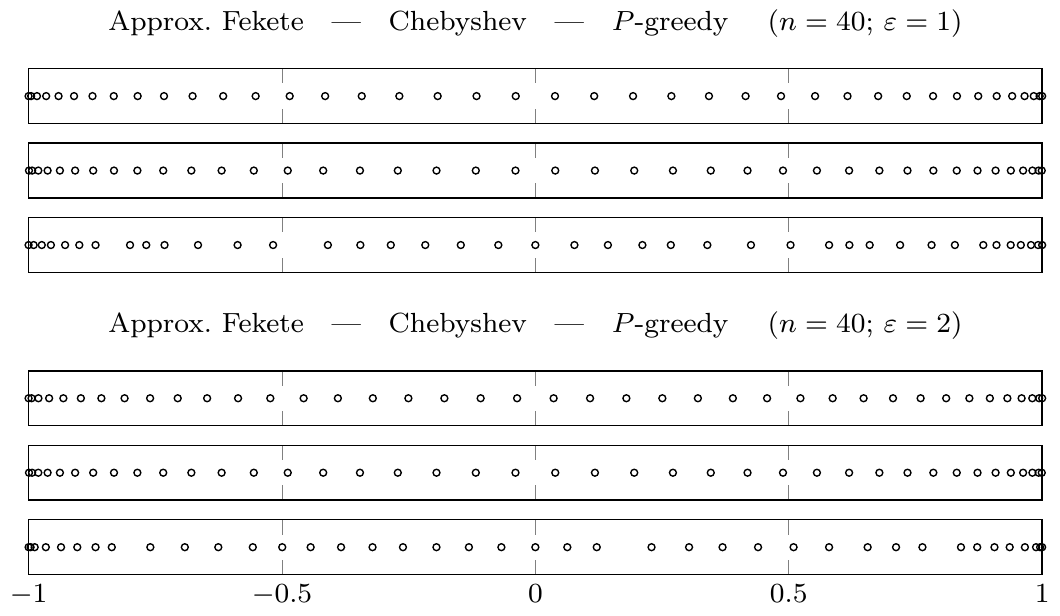}
  \caption{Approximate Fekete points (\emph{top}), Chebyshev points (\emph{middle}), and $P$-greedy points (\emph{bottom}) on $\Omega = [-1,1]$ for the Gaussian kernel~\eqref{eq:gauss-kernel} with $\varepsilon=1$ and $\varepsilon=2$.} \label{fig:points}
\end{figure}

\subsection{Power function} \label{sec:example-power-function}

Figure~\ref{fig:power-functions} displays the maxima of power functions of the univariate Gaussian kernel~\eqref{eq:gauss-kernel} with $\varepsilon = 1$ and $\varepsilon=2$ on $\Omega = [-1,1]$ for three different choices of the interpolation points:
\begin{enumerate}
\item The approximate Fekete points whose construction is outlined in Section~\ref{sec:fekete-gaussian}.
\item The $P$-greedy points, obtained via greedy maximisation of the power function as defined in~\eqref{eq:p-greedy}.
\item The classical Chebyshev points
  \begin{equation*}
    x_k = \cos\bigg( \frac{2k-1}{2n} \pi \bigg) \quad \text{ for } \quad k = 1,\ldots, n
  \end{equation*}
  which do not depend on the choice of the kernel.
\end{enumerate}
The point sets are depicted in Figure~\ref{fig:points} for $n=40$.
The $P$-greedy points as well as the power function maxima were computed by discretising the interval into 1,000 equispaced points.
That is, the next $P$-greedy point was always solved from
  \begin{equation} \label{eq:P-greedy-discrete}
    x_{n+1} \in \argmax_{ x \in \Omega_h } P_{\mathcal{X}_n}(x),
  \end{equation}
  where $\Omega_h = \{-1,-1+h,\cdots,1-h,1\}$ and $h=\frac{1}{999}$, is a uniform discretisation of $[-1,1]$.
The results show that the approximate Fekete points outperform the $P$-greedy points and the Chebyshev points.
Given Remark~\ref{rmk:flat-limit} it is not surprising that the approximate Fekete points are only marginally better than the Fekete points when the relatively small value $\varepsilon=1$ is used.
We also see that the approximate Fekete points are very close, but not identical, to the Chebyshev points when $\varepsilon=1$ and that they cover the domain more uniformly than the $P$-greedy points for the both values of $\varepsilon$ used.

As proved in Proposition~\ref{prop:convex}, the approximate Fekete points are solved from a convex optimisation problem.
Computing the next $P$-greedy point in~\eqref{eq:P-greedy-discrete} requires finding the maximum of $P_{\mathcal{X}_n}$ on the finite set $\Omega_h$, and $P_{\mathcal{X}_n}$ can be updated to step $n+1$ on $\Omega_h$ at a computational cost of $\mathcal{O}(n^2 \abs[0]{\Omega_h})$.
On the downside, it should be noted that the power function quickly becomes numerically unstable due to severe ill-conditioning of the kernel matrix of the Gaussian kernel.
The superiority of the the approximate Fekete points from computational perspective is demonstrated by our implementation which used MATLAB's native \texttt{fmincon} function to efficiently compute the approximate Fekete points without domain discretisation but had to resort to costly arbitrary-precision arithmetic (\textsf{mpmath} library~\citep{mpmath2013} in Python) for numerically stable computation of the $P$-greedy points (arbitrary-precision arithmetic was also used to compute the power function maxima for all point sets).
This makes a straightforward comparison of computational complexities of the two methods difficult.

\subsection{Specific RKHS functions} \label{sec:example-function}

\begin{figure}[th!]
  \centering
  \includegraphics{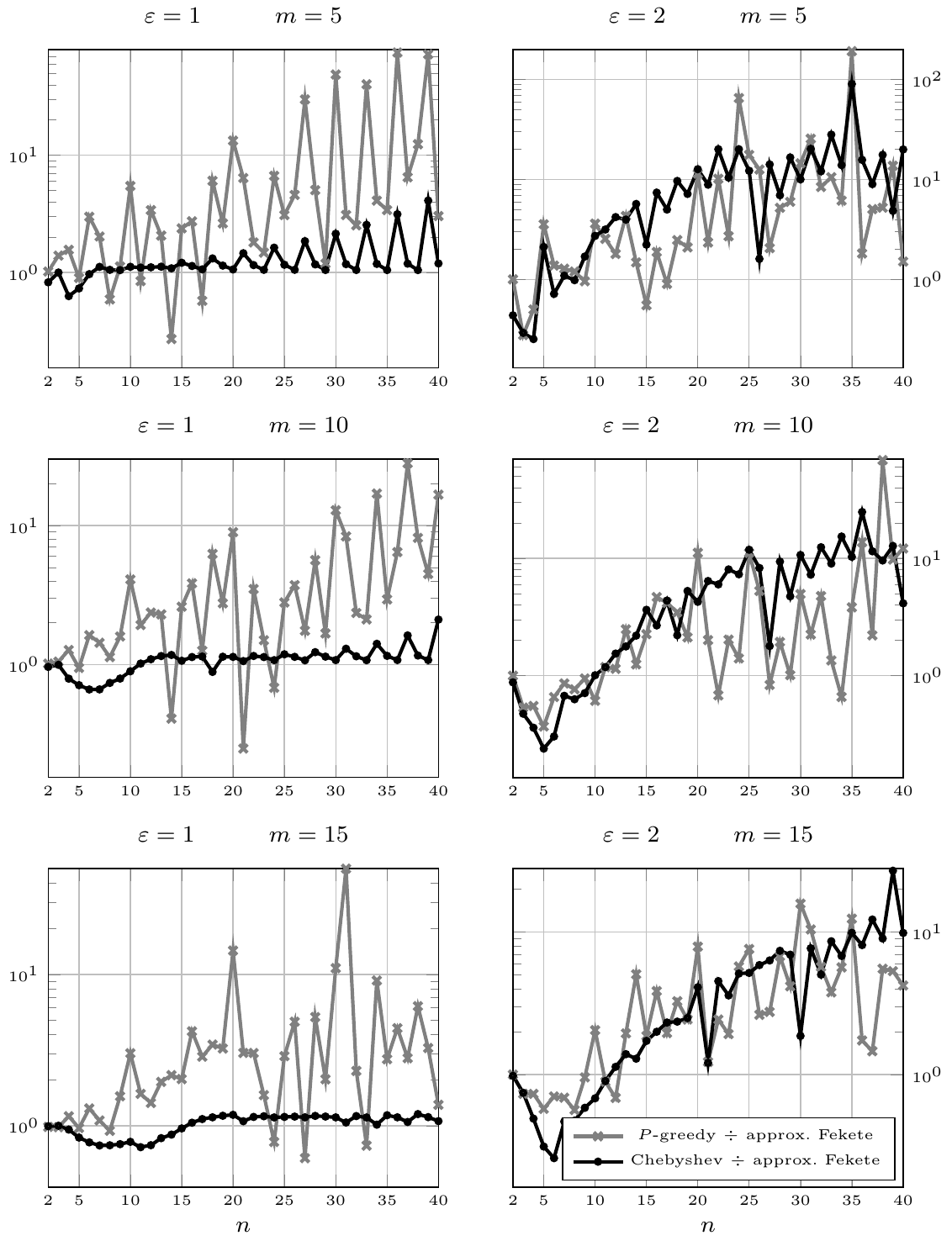}
  \caption{The ratios~\eqref{eq:ratios} of maximal errors in interpolating the function~\eqref{eq:test-function} using the kernel interpolant~\eqref{eq:kernel-interpolant} based on the Gaussian kernel~\eqref{eq:gauss-kernel} with $\varepsilon = 1$ (\emph{left}) and $\varepsilon=2$ (\emph{right}). Ratios larger than one mean that the approximate Fekete points outperform the $P$-greedy points or Chebyshev points in terms of the selected error criterion.} \label{fig:rkhs-functions}
\end{figure}

We use the kernel interpolant~\eqref{eq:kernel-interpolant} based on the Gaussian kernel~\eqref{eq:gauss-kernel} with $\varepsilon \in \{1,2\}$ to approximate the functions
\begin{equation} \label{eq:test-function}
  f_{\varepsilon,m}(x) = x^m \exp( x - \varepsilon^2 x^2 )
\end{equation}
for $m \in \{5,10,15\}$ on $\Omega = [-1,1]$.
Using~\eqref{eq:gauss-rkhs} and the expansion
\begin{equation*}
  f_{\varepsilon,m}(x) = x^m \sum_{\ell=0}^\infty \frac{1}{\ell!} x^\ell \exp( -\varepsilon^2 x^2 )
\end{equation*}
we compute that
\begin{equation*}
    \norm[0]{f_{\varepsilon,m}}_{\mathcal{H}_K(\Omega)}^2 = \sum_{\ell=m}^\infty \frac{\ell!}{2^{\ell} \varepsilon^{2\ell} ((\ell-m!))^2} = (2 \varepsilon^2)^{-m} \sum_{\ell=0}^\infty (2 \varepsilon^2)^{-\ell} \frac{(\ell+m)!}{(\ell!)^2},
\end{equation*}
which can be proved to converge by using, for example, the ratio test.
This verifies that $f_{\varepsilon,m} \in \mathcal{H}_K(\Omega)$ for every $m \in \N$ and $\varepsilon > 0$.

The results are displayed in Figure~\ref{fig:rkhs-functions} in terms of the ratios of maximal interpolation errors,
\begin{equation} \label{eq:ratios}
  \frac{\sup_{ x \in [-1,1] }\abs[0]{f_{\varepsilon,m}(x) - s_{f_{\varepsilon,m}}(x)}}{\sup_{ x \in [-1,1] } \abs[0]{f_{\varepsilon,m}(x) - s_{f_{\varepsilon,m}}^*(x)}},
\end{equation}
where the interpolant in the numerator uses either the $P$-greedy points or the Chebyshev points and the interpolant in the denominator uses the approximate Fekete points.
As in Section~\ref{sec:example-power-function}, the suprema were approximated using the 1,000-point equispaced discretisation of the interval and arbitrary-precision arithmetic.
The results show that the approximate Fekete points fairly consistently outperform the two alternatives, particularly when the number of points and the scale parameter are large ($n \geq 15$ and $\varepsilon=2$).
The results for $\varepsilon=1$ closely mirror those for the power function in Section~\ref{sec:example-power-function} in that the improvement over the Chebyshev points is only marginal.

\section*{Acknowledgements}

T.\@~Karvonen was supported by the Aalto ELEC Doctoral School and the Lloyd's Register Foundation programme on data-centric engineering at the Alan Turing Institute, United Kingdom. This research was partially carried out while he was visiting the University of Tokyo, funded by the Finnish Foundation for Technology Promotion and Oskar Öflunds Stiftelse. S.\@~Särkkä was supported by the Academy of Finland.
K.\@~Tanaka was supported by the grant-in-aid of Japan Society of the Promotion of Science with KAKENHI Grant Number~17K14241.


\end{document}